\def\ifl{\iffalse }
\def\bc{\begin{center}} \def\ec{\end{center}}
\def\ba{\begin{array}} \def\ea{\end{array}}
\def\bea{\begin{eqnarray}} \def\eea{\end{eqnarray}}
\def\beaa{\begin{eqnarray*}} \def\eeaa{\end{eqnarray*}}
\theoremstyle{definition}
\newtheorem{thm}{Theorem}[section]
\newtheorem{prop}{Proposition}[section]
\newtheorem{lem}{Lemma}[section]
\theoremstyle{remark}
\newtheorem{rem}{Remark}[section]
\newtheorem*{rem*}{Remark}
\numberwithin{equation}{section}
\newcommand{\R}{\mathbb{R}}
\newcommand{\supp}{\mathop{\mathrm{supp}}}
\newcommand{\pa}{\partial}
\newcommand{\na}{\nabla}
\newcommand{\al}{\alpha}
\newcommand{\Ga}{\Gamma}
\newcommand{\Lg}{\langle}
\newcommand{\Rg}{\rangle}
\newcommand{\OK}{\operatorname{OK}}
\title[GWP and scattering of 3D KGZ]{on the global well-posedness and scattering of the 3D Klein-Gordon-Zakharov system}
 \author[X. Cheng]{Xinyu Cheng}
 \address{X. Cheng, School of Mathematical Sciences, Fudan University, Shanghai, P.R. China}
\email{xycheng@fudan.edu.cn}
 \author[J. Xu]{Jiao Xu}
 \address{J. Xu, School of Mathematics, South China University of Technology,
  Guangzhou, P.R. China}
\email{mathxujiao@scut.edu.cn}
\begin{document}

\maketitle
\begin{abstract}
    In this paper we are interested in the global well-posedness of the 3D Klein-Gordon-Zakharov equations with small non-compactly supported initial data. We show the uniform boundedness of the energy for the global solution without any compactness assumptions on the initial data. The main novelty of our proof is to apply a modified Alinhac's ghost weight method together with a newly developed normal-form type estimate to remedy the lack of the space-time scaling vector field; moreover, we give a clear description of the smallness conditions on the initial data.
\end{abstract}

\section{introduction}
Throughout this work we consider the following Klein-Gordon-Zakharov system (KGZ) in $\R^+\times\R^3$:
\begin{equation}\label{eq:kg}\tag{KGZ}
    \begin{cases}
    \Box \phi+\phi=-n\phi, \\
    \Box n=\Delta |\phi|^2,\\
    (\phi,\partial_t \phi, n,\pa_{t}n)|_{t=0}=(\phi_0, \phi_1, n_0, n_1).
    \end{cases}
\end{equation}
Here $\square = \partial_{tt} - \Delta$ is the d'Alembertian and $\Delta$ is the usual Laplacian. The unknowns $\phi,n$ take values in $\R^3$ and $\R$ respectively; in fact they can be viewed as the Klein-Gordon and wave components in the coupled \eqref{eq:kg} system. The system above plays an important role in plasma physics
where it describes the interaction between Langmuir waves and ion sound
waves in plasma via ion density fluctuation $n$ and the electric field $\phi$ (cf. \cite{Plasma}).

In this paper, we are interested in the small data global well-posedness (GWP) of the model problem \eqref{eq:kg} and the long-term energy/pointwise asymptotic behavior of the global solutions. Moreover, we carefully explore the scattering phenomenon of \eqref{eq:kg}. Our main result is stated below:

 \begin{thm}\label{thm}
Consider the Klein-Gordon-Zakharov system as in \eqref{eq:kg} and let $K$ {be an integer no less than 8}. We denote the energy space $\mathcal{X}_K=H^{K+1}\times H^{K}\times H^K\times H^{K-1}$, where $H^m$ are standard Sobolev spaces in $\R^3$. There exists small $\varepsilon_0>0$ such that if $\varepsilon\in (0,\varepsilon_0)$ and all initial data $(\phi_0,\phi_1,n_0,n_1)$ satisfying the smallness condition below: 
{\begin{equation}\label{ID2}
\begin{split}
    &\|\Lg x \Rg^{K}\Lg \na\Rg^{K+1}\phi_0\|_2+\|\Lg x \Rg^{K}\Lg \na\Rg^{K}\phi_1\|_2+\|\Lg x\Rg^{k+6}\Lg \na \Rg^{k+4}\phi_0\|_2\\&+ \|\Lg x\Rg^{k+6}\Lg \na \Rg^{k+3}\phi_1\|_2
    +\sum_{j=0}^K\|\Lg x\Rg^{j+1}\na^j(\na n_0,n_1)\|_2<\varepsilon,
    \end{split}
\end{equation}}
{where $k\le K-4$,}
then we can conclude the following:

\texttt{(i).} The Cauchy problem for the \eqref{eq:kg} system admits a couple of global solutions $(\phi,n)$ in time with the following uniform energy estimates:  
\begin{align}\label{ue}
\| \partial \Gamma^{\le K}\phi\|_{2}+\| \Gamma^{\le K}\phi\|_{2}+\|\Gamma^{\le K}n\|_{2}\le C\varepsilon
\end{align}
for some constant $C>0$ and $\Ga$ are the usual vector fields in the area of nonlinear waves (see Section 2 for definition). 

\texttt{(ii).} Such global solutions fulfil the following optimal pointwise decay estimate: 
\begin{align}\label{op}
    |\phi(t,x)|\le C_1 \varepsilon \Lg t\Rg^{-\frac32},\quad |n(t,x)|\le C_2 \varepsilon \Lg t\Rg^{-1} \Lg t-|x|\Rg^{-\frac12},
\end{align}
for some positive constants $C_1,C_2>0$ and here $\Lg t\Rg$ denotes the usual Japanese bracket that we refer to Section 2 for the details.

\texttt{(iii).} The solution $(\phi,n)$ scatters to a free solution in $\mathcal{X}_K$ as $t\to+\infty$: there exists $(\phi_{l_0},\phi_{l_1},n_{l_0},n_{l_1})\in \mathcal{X}_K$ such that 
   \begin{align}
    \lim_{t\to \infty}\|(\phi,\partial_t\phi, n,\partial_t n)-(\phi_l,\partial_t\phi_l, n_l,\partial_t n_l)\|_{\mathcal{X}_K}= 0,
\end{align}
where $(\phi_l,n_l)$ is the linear homogeneous solution to the Klein-Gordon-Zakharov system with initial data $(\phi_{l_0},\phi_{l_1},n_{l_0},n_{l_1})$.
\end{thm}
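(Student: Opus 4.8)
The plan is to run a continuity/bootstrap argument on the time slices $\{t=\mathrm{const}\}$, using only the vector fields $\Gamma\in\{\pa_\al,\ \Omega_{ij}=x_i\pa_j-x_j\pa_i,\ L_i=t\pa_i+x_i\pa_t\}$, all of which commute with $\Box+1$ and with $\Box$. The scaling field $S=t\pa_t+x\cdot\na$ is deliberately excluded: $[\Box+1,S]=2\Box$ destroys the Klein--Gordon structure, and, since the source $\Delta|\phi|^2$ of the $n$-equation contains $\phi$, $S$ cannot be propagated on the wave component either. From the local theory in $\mathcal X_K$ let $[0,T^*)$ be the maximal interval, and on $[0,T]\subset[0,T^*)$ posit the bootstrap bounds $\|\pa\Gamma^{\le K}\phi\|_2+\|\Gamma^{\le K}\phi\|_2+\|\Gamma^{\le K}n\|_2\le C_0\vep$ (which contains \eqref{ue}), together with the associated wave energy $\|\pa\Gamma^{\le K}n\|_2$ and a second, lower-order but more heavily $\Lg x\Rg$-weighted tier of ghost-weighted estimates built from the $\Lg x\Rg^{k+6}$-norms in \eqref{ID2} ($k\le K-4$); this second tier will play the role of the missing scaling field when we extract near-light-cone information. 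Commuting the system gives, with the $n$-equation estimated one $\Gamma$-order below $\phi$ (consistent with $\mathcal X_K$),
\begin{align*}
(\Box+1)\Gamma^\al\phi&=-\sum_{\be+\ga=\al}c_{\be\ga}\,\Gamma^\be n\,\Gamma^\ga\phi\ =:\ F_\al,\\
\Box\Gamma^\al n&=\sum_{\be+\ga=\al}c'_{\be\ga}\,\Delta\big(\Gamma^\be\phi\,\Gamma^\ga\phi\big)+(\text{lower-order commutators from }[\Delta,L_i])\ =:\ G_\al,
\end{align*}
where the two derivatives from $\Delta$ in $G_\al$ are absorbed using $\pa\in\Gamma$ and the fact that the $\phi$-hierarchy sits one order above the $n$-hierarchy.

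The next step converts these bounds into pointwise decay. The Klainerman--Sobolev inequality for $\Box+1$ — which needs only $\pa,\Omega,L$ — gives $|\Gamma^{\le K-2}\phi(t,x)|\lec\vep\,\Lg t\Rg^{-3/2}$. For the wave field I would \emph{not} invoke a Klainerman--Sobolev inequality directly (the $\Lg t-|x|\Rg^{1/2}$ gain would require $S$); instead I combine the Duhamel representation $n=n_{\mathrm{lin}}+\Box^{-1}\Delta|\phi|^2$, the $\Lg s\Rg^{-3}$ decay of the source, and a weighted energy-to-pointwise estimate fed by the heavily weighted tier, to obtain $|\Gamma^{\le K'}n(t,x)|\lec\vep\,\Lg t\Rg^{-1}\Lg t-|x|\Rg^{-1/2}$; here the extra $\Lg x\Rg$-weights on the data do the work that $S$ usually does.

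The heart of the matter is the energy estimate. Following Alinhac, introduce the ghost weight $e^{q(|x|-t)}$ with $q(\sigma)=\int_{-\infty}^{\sigma}\Lg\tau\Rg^{-1-\de}\,d\tau$ bounded and increasing, in a form modified to serve both operators. Pairing $(\Box+1)\Gamma^\al\phi=F_\al$ with $e^{q}\pa_t\Gamma^\al\phi$ and $\Box\Gamma^\al n=G_\al$ with $e^{q}\pa_t\Gamma^\al n$ gives
\begin{align*}
&\frac{d}{dt}\cE_\al(t)+\int_{\R^3}\frac{e^{q(|x|-t)}}{\Lg|x|-t\Rg^{1+\de}}\Big(|\bar\pa\Gamma^\al\phi|^2+|\Gamma^\al\phi|^2+|\bar\pa\Gamma^\al n|^2\Big)\,dx\\
&\qquad\lec\ \Big|\int_{\R^3}e^{q(|x|-t)}\big(F_\al\,\pa_t\Gamma^\al\phi+G_\al\,\pa_t\Gamma^\al n\big)\,dx\Big|,
\end{align*}
with $\cE_\al\simeq\|\pa\Gamma^\al\phi\|_2^2+\|\Gamma^\al\phi\|_2^2+\|\pa\Gamma^\al n\|_2^2$ and $\bar\pa$ the derivatives tangent to the light cone (the Klein--Gordon mass additionally donates the $|\Gamma^\al\phi|^2$ good term). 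The $G_\al$-contribution is routine: after moving one derivative by parts it is $\lec\vep^2\Lg t\Rg^{-3/2}\|\pa\Gamma^{\le K}n\|_2$, which is $L^1_t$. The dangerous term is the piece $\Gamma^\be n\,\Gamma^\ga\phi$ of $F_\al$ near the light cone, where only $|n|\lec\vep\Lg t\Rg^{-1}\Lg t-|x|\Rg^{-1/2}$ is available and a crude bound yields the divergent $\int\Lg t\Rg^{-1}\,dt$. Here I would use a normal-form--type device exploiting that the massive--massless interaction is globally non-resonant: with $a=\pm|\xi|$ the frequency of $n$ and $m=\pm\sqrt{|\eta|^2+1}$ that of $\phi$, the symbol
\[
|\xi+\eta|^2+1-(a+m)^2=2\big(\xi\cdot\eta-am\big)
\]
never vanishes and satisfies $|\xi\cdot\eta-am|\gec|\xi|\,\Lg\eta\Rg^{-1}$, so there is a bilinear multiplier $B$, of order $-1$ in the $n$-frequency and $+1$ in the $\phi$-frequency, with $(\Box+1)B(n,\phi)=n\phi+(\text{cubic})$. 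Substituting, $\Gamma^\al\phi-B(\cdot)$ solves a Klein--Gordon equation with a cubic right-hand side, and integrating the $e^{q}F_\al\,\pa_t\Gamma^\al\phi$ pairing by parts in $t$ turns the bad quadratic pairing into (a) a correction to $\cE_\al$ of size $\lec\|B(n,\phi)\|\lec\vep^2$, and (b) cubic spacetime integrals each carrying an extra $\Lg t\Rg^{-3/2}$ or $\Lg t\Rg^{-1}$ factor, hence $L^1_t$. The one-derivative loss on $\phi$ in $B$ is affordable because $\phi$ is carried one order above $n$; the $|\xi|^{-1}$ low-frequency singularity of $B$ in the $n$-variable is handled by the $\Lg x\Rg$-weighted bounds on $(\na n_0,n_1)$ in \eqref{ID2}; and the non-Lorentz-invariance of $B$ only produces commutators of lower order. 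Summing over $|\al|\le K$ and over both tiers, absorbing the small pieces into the left-hand good terms and using a Gr\"onwall argument, one reaches $\cE(T)+(\text{good spacetime terms})\lec\vep^2+(C_0\vep)^3$, which for $\vep$ small improves $C_0\vep$ to $\tfrac12 C_0\vep$; hence $T^*=\infty$, which is (i), and feeding the global bounds back into the decay estimates of the previous paragraph gives (ii).

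Finally (iii). Since $\|\Delta\Gamma^{\le K}|\phi|^2\|_{\dot H^{-1}}\lec\|\Gamma^{\le K}|\phi|^2\|_{\dot H^1}\lec\vep^2\Lg t\Rg^{-3/2}\in L^1_t$, the Duhamel integral for $n$ converges in $H^K\times H^{K-1}$ and $(n,\pa_t n)$ scatters directly to a free wave. For $\phi$ the bare nonlinearity $n\phi$ is not $L^1_tH^K$, so instead I would show that the normal-form variable $\phi-B(n,\phi)$ scatters — its Klein--Gordon equation has a cubic, hence $L^1_tH^K$, right-hand side — and that $\|B(n,\phi)(t)\|_{H^{K+1}\times H^K}\to0$ as $t\to\infty$ (using $\|n\|_{L^\infty}+\|\phi\|_{W^{1,\infty}}\to0$ together with the uniform $\mathcal X_K$-bounds); adding back $B(n,\phi)$ gives scattering of $(\phi,\pa_t\phi)$. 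The limits of the backward linear evolutions furnish $(\phi_{l_0},\phi_{l_1},n_{l_0},n_{l_1})\in\mathcal X_K$ and the stated convergence. The single serious obstacle throughout is this borderline term $n\phi$: with no scaling field the usual weighted Klainerman--Sobolev and Alinhac machinery are unavailable, and closing the estimates requires the simultaneous interplay of the modified ghost weight, the non-resonant normal form (with its low-frequency and non-invariance subtleties), and the extra $\Lg x\Rg$-weighted tier of estimates standing in for $S$.
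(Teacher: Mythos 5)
Your proposal overlaps with the paper at the level of the ghost weight and the ``no scaling field'' philosophy, but it diverges from the paper's actual argument in two structurally important places, and one of these divergences is a genuine gap.

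First, the gap: you never explain how the bootstrap on the \emph{undifferentiated} wave norm $\|\Gamma^{\le K}n\|_{2}$ closes. Your energy $\mathcal E_\alpha\simeq\|\pa\Gamma^\al\phi\|_2^2+\|\Gamma^\al\phi\|_2^2+\|\pa\Gamma^\al n\|_2^2$ tracks $\pa\Gamma^\al n$ but not $\Gamma^\al n$, and for the wave equation the $\dot H^1\times L^2$ energy gives no control on $\|n\|_{L^2}$ (free waves in 3D with generic $L^2\times L^2$ data have growing $L^2$ norm). Yet $\|\Gamma^{\le K}n\|_2$ is precisely what you need in the case $|\al_1|\ge|\al_2|$ of the $\phi$-energy estimate, where $\Gamma^{\al_1}n$ carries the high order and must go in $L^2$. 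The paper's resolution is the decomposition $n=n^0+\Delta n^\Delta$ (a Katayama-type move exploiting that $\Delta|\phi|^2$ is a pure divergence): $n^0$ is a free wave, controlled in $L^2$ by the $X(\pa)$/conformal energy with the $\langle x\rangle$-weighted data in \eqref{ID2}; and $\Gamma^\al\Delta n^\Delta$ involves \emph{two} derivatives of $n^\Delta$, so is dominated by the genuine energy $\|\pa\nabla\Gamma^{\le K}n^\Delta\|_2$, for which the source $\nabla|\phi|^2$ costs only one $\phi$-derivative and closes against $\|\pa\Gamma^{\le K}\phi\|_2$. Your vague appeal to a ``heavily $\langle x\rangle$-weighted tier'' standing in for $S$ does not address this; the decomposition is the missing step.

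Second, the route: the ``normal-form type estimate'' in the paper is not a Shatah/Klainerman Fourier-multiplier transform. It is the physical-space Lemma~\ref{lem2.7},
$\big|\tfrac{\langle t+r\rangle}{\langle t-r\rangle}u\big|\lesssim|\pa\Gamma^{\le 1}u|+\big|\tfrac{\langle t+r\rangle}{\langle t-r\rangle}F\big|$
for $\Box u+u=F$, obtained by solving the commuted equation for $u$ near the cone. It is deployed in the \emph{decay} estimate for $\phi$ (to trade $\langle t-r\rangle$ for $\langle t+r\rangle$ when applying Georgiev's $\langle t\rangle^{-3/2}$ estimate), not in the energy estimate. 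In the energy estimate the ghost weight already closes the dangerous term by itself: with $|\Gamma^{\le k}n|\lesssim\vep\langle t+r\rangle^{-3/4}\langle t-r\rangle^{-1/2}$ and $q'\sim\langle t-r\rangle^{-1}(\log)^{-2}$, Cauchy--Schwarz gives $\int e^p q'|\Phi|^2 + \int e^p(|n|^2/q')|\pa_t\Phi|^2$ with $|n|^2/q'\lesssim\vep^2\langle t\rangle^{-3/2+}$, which is integrable in time — no normal form needed there. Your Fourier-side $B(n,\phi)$ is in principle usable but brings real overhead you only wave at: $B$ does not commute with $\Omega_{i0}$, the $|\xi|^{-1}$ singularity on the wave input must be propagated through the bootstrap (not just imposed at $t=0$), and the device is being introduced to fix a term the ghost weight already handles. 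Finally, for scattering the paper does not need a normal-form variable at all: the uniform bounds and decay already give $\int_0^\infty(\|\Delta|\phi|^2\|_{H^{K-1}}+\|n\phi\|_{H^K})\,dt\lesssim\vep^2$, after which a direct unitary-semigroup argument produces the scattering data. Your normal-form scattering argument is an alternative, but again more machinery than the paper uses.
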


\begin{rem}
There are a few global well-posedeness results for the Klein-Gordon-Zakharov system with small initial data  (cf. \cite{DW20,Kata12,OTT95,D}). In fact our main contribution to this problem is to obtain uniform boundedness of the energy under very mild regularity assumptions on non-compactly supported initial data, namely $K\ge 8$. More specifically, the original work \cite{OTT95} required $K\ge52$ while the paper \cite{D} previously improved the initial condition to $K\ge 15$. The novelty of ours is based on a newly developed normal-type estimate where we manage to handle the lack of the space-time  scaling operator $L_0$. We also provide a proof of the linear scattering as $t\to+\infty$. Another new point of our work lies in giving a clear description of the smallness conditions of non-compactly supported initial data (as mentioned in Remark~\ref{rem1.2}).

\end{rem}

\begin{rem}\label{rem1.2}
 We emphasize that the initial assumption \eqref{ID2} is different from those in nonlinear waves (cf. \cite{CLLX}). More specifically speaking, one can see from \eqref{ID2} that $\|\Lg x\Rg^K \Lg\na\Rg^{K+1}\phi_0\|_2<\varepsilon$ is quite strong since even $\phi_0$ has to obey certain spatial decay properties: $\|\Lg x\Rg^K \phi_0\|_2<\varepsilon$. However such assumption is necessary. To see this, we first observe that when performing energy estimates with vector field $\Omega_{i0}$, one may run into the quantity $\|\Omega_{i0}^4 \phi|_{t=0}\|_2$. Therefore it suffices to consider $\|\Lg x\Rg^4 \partial_t^4\phi|_{t=0}\|_2$. Note that by applying \eqref{eq:kg} twice, one can obtain
 \begin{align}
     \|\Lg x\Rg^4 \partial_t^4\phi|_{t=0}\|_2\lesssim&  \|\Lg x\Rg^4 \partial_t^2\Delta\phi|_{t=0}\|_2+ \|\Lg x\Rg^4 \partial_t^2\phi|_{t=0}\|_2+ \|\Lg x\Rg^4 \partial_t^2(\phi n)|_{t=0}\|_2\\
     \lesssim& \|\Lg x\Rg^4 \Delta^2\phi|_{t=0}\|_2+ \|\Lg x\Rg^4 \Delta \phi|_{t=0}\|_2+ \|\Lg x\Rg^4 \phi|_{t=0}\|_2+\text{nonlinear terms}.
 \end{align}
As shown above, one already needs to control terms such as $\|\Lg x\Rg^4 \phi|_{t=0}\|_2=\|\Lg x\Rg^4 \phi_0\|_2$. We refer the readers to Proposition~\ref{prop} for more details.
 
\end{rem}

\begin{rem}\label{rem1.1}
 To see the decay estimate \eqref{op} is optimal, one can consider the homogeneous linear wave and Klein-Gordon equation in $\R^+\times\R^3$:
\begin{align}
     \Box u=0,\quad \Box v+v=0,
 \end{align}
with sufficiently nice initial data. Then it follows from the fundamental solution formula that the following estimates hold:
\begin{align}
    |u|\lesssim \Lg t\Rg^{-1}\Lg t-r\Rg^{-\frac12},\quad |v|\lesssim\Lg t\Rg^{-\frac32}.
\end{align}
We refer the readers to \cite{H97} for the wave equation and \cite{G1992} for the Klein-Gordon equation.
\end{rem}
\begin{rem}
    It is known from the physics that for strong Langmuir turbulence, the Langmuir phase velocity in the
Klein-Gordon component is of great difference (usually about one thousand times as large ) from the
ion acoustic phase velocity in the wave equation (we refer to \cite{OTT99} and \cite{Plasma}). In fact there is no scaling transformation can make them
equal (while this is possible for the original Zakharov equations), however, the model \eqref{eq:kg} has aroused great attention despite all this in the mathematical context due to its complex nonlinear structure. We refer the readers to the historical review below. Models with more physical background will be discussed in the future.
\end{rem}

We recall some previous results in the literature that are closely related to the presenting paper here.

\subsection*{Historical review}

Nonlinear wave equations have been widely studied: consider the following second order quasilinear wave equation in $[0,\infty)\times\R^d$ $(d\ge2)$:
\begin{equation}\label{eq:wend}
    \Box u=g^{kij}\pa_{k} u\pa_{ij}u,
\end{equation}
where $g^{kij}$ are constants and assume the initial data are sufficiently good. For the $d\ge 4$ case, GWP of \eqref{eq:wend} with small initial data was obtained (cf. \cite{H97}). When $d=3$, GWP of \eqref{eq:wend} was obtained in the pioneering work by Klainerman \cite{K86} and by Christodoulou \cite{C} under the null condition ($g^{kij} \omega_k \omega_i \omega_j=0,$ for $\omega_0=-1$ and $(\omega_1,\omega_2,\omega_3)\in \mathbb{S}^2$). However, on the other hand, the solutions will admit finite time blow-up behaviors if the null conditions do not hold (cf. \cite{J81}). When it comes to the case $d=2$, Alinhac in the seminal work \cite{Alinh01_1,Alinh01_2} shows that \eqref{eq:wend} admits a small data global smooth solution if the null conditions hold and on the contrary the solution blows up in finite time without the null conditions. Moreover, in \cite{Alinh01_1} Alinhac showed that the highest norm of the solution grows at most polynomially in time by introducing the ``ghost weight'' method in order to tackle the slow decay. Many recent work have successfully improved the previous results and we list several of relation here. In \cite{CLX,Li21} the authors prove uniform boundedness of the highest norm of the solution by developing a new normal-form type strategy in \cite{CLXZ}; in \cite{D21} similar results were obtained by applying vector field method on hyperboloids dating back to Klainerman and H\"ormander. Beyond these, there are studies focusing on dealing with models where the Lorentz invariance is not available. Such systems include non-relativistic wave systems with multiple wave speeds (cf. \cite{ ST01}) and 
exterior domains (cf. \cite{M06}). 
In the subsequent work \cite{CLX,CLXZ,Li21}, the authors developed a new systematic normal-form strategy and obtained GWP results of 2D quasilinear wave equations without using the Lorentz boost operators $\Omega_{i0}$; in fact this strategy provides an idea to handle models where space-time scaling operator $L_0$ is not applicable. It is also worth mentioning \cite{CLLX} where the authors obtained GWP of the 2D quasilinear wave system with non-compact small initial data by developing a novel $(L^2, L^\infty)$ estimates relying on the fundamental solution of the wave equation.

The nonlinear Klein-Gordon equations have been roundly studied as well. In the breakthrough work by Klainerman \cite{K85} and Shatah \cite{Shata85}, the Klein-Gordon equations with quadratic nonlinearities were shown to possess small global solutions. Motivated by the large amount of inventive work on nonlinear wave equation and Klein-Gordon equation mentioned above, the coupled wave and Klein-Gordon systems have arouse a great deal of interest for decades. Among which, to our best knowledge, one of the very first result on this area was obtained by Bachelot \cite{B88} on
the Dirac-Klein-Gordon equations. Later in \cite{G90} Georgiev proved GWP with strong null nonlinearities. Much more physical models described by the coupled wave and Klein-Gordon systems
have been studied since then. We list a few models of interest here: the Klein-Gordon-Zakharov equations\cite{D,DW20,Kata12,OTT95}, the
Maxwell-Klein-Gordon equations \cite{KWY20,P99,P99b}, the Einstein-Klein-Gordon equations \cite{LM16,Wang20}, and Dirac-Klein-Gordon model \cite{DLY22}. 

From now on we review some results concerning the Klein-Gordon-Zakharov system arising in plasma physics. We refer the readers to its physical background in \cite{Plasma,Zak72} and briefly review some related models.
Firstly we recall that the Euler-Maxwell equations are of fundamental importance in the area of plasma physics. In particular the Zakharov equations can be derived via the Euler equation for the electrons and ions coupled with the Maxwell equation for the electric field (cf. \cite{Te07}). The Euler-Maxwell equations were shown to admit small data global solutions in $\R^+\times \R^3$ by Guo et.al \cite{GIP16} and the Euler-Poisson system in $\R^+\times \R^2$ was proved to admit
global solutions by Li-Wu \cite{LW14}. 
Secondly, on top of the small data GWP, there are many other interesting results concerning different aspects of the Klein-Gordon-Zakharov system or other closely related models. We hereby list a few of them: GWP of the Klein-Gordon-Zakharov equations with multiple propagation speeds \cite{OTT99}, the convergence of the Klein-Gordon-Zakharov equations to the Schr\"odinger
equation (also the Zakharov system) as certain parameters go to $+\infty$ (cf. \cite{MN08}), the finite time blow-up for Klein-Gordon-Zakharov with rough data \cite{SW19}, the long-term pointwise decay of the small global solution (cf. \cite{D_KG21,DM10,GY95}), scattering (cf. \cite{GNW14, GNW14b, HPS13}). Other related developments with different strategies can be found in the papers  \cite{DM21,DW20, M22,M222,WY14,Y22,Z22}.

The presenting paper was mainly motivated by \cite{D}, where the author proved that the energy for the solution to the Klein-Gordon-Zakharov equations is uniformly bounded. Meanwhile the initial data therein are required to have rather high regularity. To be more precise, the initial condition $(\phi_0,\phi_1,n_0,n_1)$ are supposed to have the following smallness assumption with $K\ge 15$ and $\varepsilon$ being sufficiently small:
\begin{align}\label{1.9}
\sum_{I \leq K+2}
\| \langle x\rangle^{ I +2}\nabla^I \phi_0\| + \sum_{  I \leq K+1}
\| \langle x\rangle^{ I +3}\nabla^I \phi_1\| 
+ \sum_{  I \leq K+1}\| \langle x \rangle^{ I} \nabla^I n_0\| + \sum_{ I\leq K}
\| \langle x \rangle^{ I +1}\nabla^I n_1\| \leq \varepsilon.
\end{align}
Their proof relies on certain $L^\infty-L^\infty$ estimates of both wave and Klein-Gordon components. One of our modest goals is to replace the $L^\infty-L^\infty$ estimates on the wave component together with the contraction mapping by the usual energy bootstrap method (armed with the newly-developed normal form strategy in \cite{CLX, CLXZ}) and eventually reduce the regularity assumption of the initial data to around $K=8$. We also expect to give a clear limn of the assumptions on the initial data. 

Before we sketch the strategy of our proof, we briefly demonstrate the main difficulties of this model problem here. To begin with, as we mentioned earlier in Remark~\ref{rem1.1}, the optimal pointwise decay of $(\phi,n)$ one can expect (ignoring the constant) is \eqref{op}:
\begin{align}\label{1.10}
    |\phi(t,x)|\lesssim \Lg t\Rg^{-\frac32},\quad |n(t,x)|\lesssim \Lg t\Rg^{-1} \Lg t-|x|\Rg^{-\frac12}.
\end{align}
Recall that the Klein-Gordon component
\begin{align}\label{1.11}
    \Box \phi+\phi =-\phi n,
\end{align}
then the usual energy estimate (by taking $L^2$ inner product  with $\partial_t \phi$ on \eqref{1.11}) yields 
\begin{align}
    \frac{d}{dt}(\| \partial \phi\|_2^2+\|\phi\|_2^2)=-2\int_{\R^3}\phi n \cdot\partial_t\phi \ dx,
\end{align}
where $\partial=(\partial_t,\nabla)$ (see Section 2). It is then very natural to integrate in time and estimate the following quantity:
\begin{align}
    \int_0^t \|\phi n\|_2(s) \ ds\lesssim\int_0^t \|\phi\|_{\infty} \|n\|_2 \ ds +\int_0^t \|n\|_{\infty} \|\phi\|_2 \ ds.
\end{align}
Taking \eqref{1.10} into consideration and naively assuming the $L^2$ norms are uniformly bounded in time we arrive at:
\begin{align}
    \int_0^t \|\phi n\|_2 \ ds\lesssim \int_0^t (\Lg s\Rg^{-1}+\Lg s\Rg^{-\frac32})\ ds \lesssim \log(1+t), 
\end{align}
which implies a blow-up of the energy as $t$ goes to $+\infty$. We observe that the decay $\Lg t-r\Rg^{-\frac12}$ has not been made fully use of while taking supremum in space; in fact one of the main advantages of the aforementioned hyperbolic change of variable (vector field method on hyperboloids) is that one can gain better control of the so-called conformal energy thanks to the extra integrability in the hyperbolic time $s=\sqrt{t^2-r^2}$. To deal with such issue, we follow the idea in \cite{D}, namely, apply the well-adapted Alinhac's ghost weight method to absorb extra $\Lg t-r\Rg$ weight and hence generate better time decay. Secondly, the space-time scaling is not invariant in the KGZ system therefore the $L_0$ vector field is not applicable. In the presenting paper, we develop a new normal-form strategy that is motivated by the sequential work \cite{CLX,CLXZ} where the Lorentz invariance is not available. Lastly, another question is whether one can relax the rather strict regularity assumption of the initial data. The assumption \eqref{1.9} in \cite{D} is due to certain $L^\infty-L^\infty$ estimates on the wave equation (with some types of Sobolev embedding). In this paper we employ a new energy bootstrap based on the normal-form strategy so that no $L^\infty-L^\infty$ estimates are needed for the wave component.

\subsection*{Outline of the proof.} We hereby outline the key steps of our proof. To illustrate the idea, we fix any multi-index $\alpha$ and denote $\Phi = \Ga^\al \phi$ and $V=\Ga^\al n$ with $|\al|=K$ (we suppress the dependence on $\alpha$ to ease the notation). We point out here $\Ga$ cannot be $L_0$ due to the lack of space-time scaling invariance; for the exact definition we refer to \eqref{def_Gamma0}--\eqref{def_Gamma}. 

\texttt{Step 1.} {Uniform boundedness of the energy.} To start with, we make the following $a\ priori$ hypothesis:
\begin{align}
|\Gamma^{\le k} n|\le C\varepsilon\langle  t+r\rangle^{-\frac34}\langle r-t\rangle^{-\frac12},\quad |\Gamma^{\le k} \phi|\le C\varepsilon \langle t+r\rangle^{-\frac32},\label{Ab}
\end{align}
where positive constant $C$ is chosen later and $k_1$ is around half size of $K$. Note that here the assumption \eqref{Ab} is not optimal but is still sufficient to derive the uniform boundedness of the energy. In fact, as will be addressed later, we will arrive at the optimal decay estimates in time \eqref{op} as a by-product in the procedure of closing the bootstrap.  We first apply Alinhac's ghost weight method by choosing $p(r,t)=q(r-t)$ with $q'(s)$ scaling almost as $\Lg s\Rg^{-1}$. Let us focus on the Klein-Gordon equation after applying $\Ga^\alpha$:
\begin{align}\label{1.16}
    \Box \Phi+\Phi=-\Ga^\al(\phi n).
\end{align}
Then by taking the $L^2$ inner product on \eqref{1.16} with $\partial_t \Phi\cdot p(r,t)$ we can obtain that
\begin{align}
\frac12\frac{d}{dt}(\| e^\frac{p}{2}\partial \Phi\|_{2}^2+\| e^\frac{p}{2} \Phi\|_{2}^2) +\frac 12
   \int e^p q^{\prime} (|T \Phi|^2+|\Phi|^2)\sim \int e^p (\Phi n +V\phi)\partial_t \Phi\ dx+\cdots,
\end{align}
where ``$\cdots$'' denotes harmless terms which do not contribute to the main term. At first glance, clearly we can control
\begin{align}\label{1.18}
    \left|\int V\phi e^p\partial_t\Phi\ dx\right|\lesssim\|V\|_2\|\phi\|_\infty\|\partial_t\Phi\|_2\lesssim\Lg t\Rg^{-\frac32}\| V\|_2\|\partial_t\Phi\|_2.
\end{align}
The other term is handled by applying the usual Cauchy-Schwartz inequality as below:
\begin{equation}
\left|\int \Phi n e^p\partial_t\Phi\ dx\right|\le \frac14  \int e^p q'\cdot |\Phi|^2\ dx+ \mbox{constant} \cdot \int |\pa \Phi|^2 \cdot\frac{|n|^2 }{q'} e^p\ dx,
\end{equation}
which leads to uniform boundedness of the energy in time {(here  $\frac{|n|^2 }{q'}\sim t^{-\frac32}$ by \eqref{Ab} )}. 

It remains to perform energy estimates on the wave equation as well. As already occurred in \eqref{1.18}, in the
original formulation of the KGZ equations, there appear wave unknowns such as $V=\Ga^\al n$ without derivatives. This leads to difficulties because the undifferentiated quantities cannot be controlled by the natural energy ($\|\partial \Ga^\al n\|_2$). Our approach is to decompose $n$ into two parts: $n = n^0 + \Delta n^\Delta$, where 
\begin{align}
\begin{cases}
\Box n^0=0,\\
(n^0,\partial_t n^0)|_{t=0} =(n_0,n_1),
\end{cases}
\qquad \text{and}\qquad\
\begin{cases}
\Box n^\Delta=|\phi|^2,\\
(n^\Delta,\partial_t n^\Delta) |_{t=0}=0.
\end{cases}
\end{align}
This kind of reformulation can be dated back to \cite{Kata12}, where Katayama considered nonlinear wave equations with
nonlinearities of divergence form. One way to understand this decomposition is to treat $n^\Delta$ as a small perturbation of order $O(\varepsilon^2)$ (the ``leading'' term $n^0$ is of order $O(\varepsilon)$). We refer the readers to Remark~\ref{rem3.2}.

\texttt{Step 2.} {Optimal pointwise time decay and scattering.} We then close the energy bootstrap by showing the optimal pointwise decay \eqref{op} (better decay than \eqref{Ab}). The decay of the Klein-Gordon component $\phi$ follows from a result by Georgiev in \cite{G1992} (cf. Lemma~\ref{kgt32}). In short words, it suffices for us to bound
\begin{align}
    \|\Lg t+|\cdot|\Rg^{1+\delta}\Ga^{\le k+4}(\phi n)\|_2,
\end{align}
which is not hard due to the $a\ priori$ assumption. We also note that Lemma~\ref{lem2.7} plays an important role in dealing with terms such as
\begin{align}
    \|\Lg t+|\cdot|\Rg^{1+\delta}(\Ga^{\le k+4}\phi) n \|_2.
\end{align}
In fact one can view Lemma~\ref{lem2.7} as an iteration process: the Klein-Gordon component can gain better $\Lg t+r\Rg$ decay as long as it can absorb more $\Lg t-r\Rg$ weight.

We then need to obtain the optimal pointwise decay for the wave component $n=n^0+\Delta n^\Delta$. $n^0$ is easier to handle since it is a free wave solution. On the other hand, to estimate the $n^\Delta$ part and remedy the lack of $L_0$ vector fields, we employ $L^{\infty}$ and $L^2$ estimates involving the weight-factor $\langle
r-t\rangle$. The key observation (see Lemma~\ref{lem2.6}) is that one can control $\Lg t-r\Rg\partial^2u$ as follows:
\begin{align}
&|\langle t-r\rangle\partial_{tt} u|+|\langle t-r\rangle\nabla \partial_t u|+|\langle t-r\rangle\nabla^2 u|
 \lesssim  |\partial \Gamma^{\le 1} u|+\langle t+r\rangle|\Box u|.
\end{align}
At the expense of the smallness of the energy, we obtain the optimal decay. These in turn lead to the closure of the energy bootstrap with a careful choice of the constants. In the end, we show the solution $(\phi,n)$ scatters as $t\to+\infty$. In fact this results from a semi-group method and the uniform boundedness of the energy in time.

The rest of this work is organized as follows. In Section 2 we collect the notation and some
preliminaries together with useful lemmas. In Section 3 we show the uniform boundedness of the energy. The optimal pointwise decay in time and scattering are included in Section 4, therefore the proof of Theorem~\ref{thm} is complete. We leave the analysis on the assumption of the initial data in the Appendix.

\section{Preliminaries and notation}

\subsection*{Notation}
We shall use the Japanese bracket notation: $ \langle x \rangle = \sqrt{1+|x|^2}$, for $x \in \mathbb R^3$.  We denote  $\partial_0 = \partial_t$,
$\partial_i = \partial_{x_i}$, $i=1,2,3$ and  
\begin{align}
& \partial = (\partial_i)_{i=0}^3, \; \Omega_{ij}=x_i\partial_j-x_j\partial_i, 1\le i<j\le 3;\; \Omega_{i0}=t\pa_{i}+x_{i}\pa_{t}, 1\le i\le 3;
\; \Omega = r \partial_t + t \partial_r;\notag\\
& \Gamma= (\Gamma_i )_{i=1}^{10}, \quad\text{where } \Gamma_1 =\partial_t, \Gamma_2=\partial_1,
\Gamma_3= \partial_2, \Gamma_4= \partial_3, \Gamma_5=\Omega_{10},\Gamma_6=\Omega_{20},\Gamma_7=\Omega_{30},
\label{def_Gamma0} \\& \Gamma_8=\Omega_{12},\Gamma_9=\Omega_{13}, \Gamma_{10}=\Omega_{23};
\label{def_Gamma}\\
& \Gamma^{\alpha} =\Gamma_1^{\alpha_1} \Gamma_2^{\alpha_2}
\cdots\Gamma_{10}^{\alpha_{10}}, 
\qquad \text{$\alpha=(\alpha_1,\cdots, \alpha_{10})$ is a multi-index};  \notag\\
&\widetilde\Gamma=(\widetilde\Gamma_i )_{i=1}^{11}=(L_0,\Gamma),\qquad L_0=t\partial_t+r\partial_r,\qquad r=|x|; \label{def-tdGamma}\\
& \widetilde\Gamma^{\beta} =\Gamma_1^{\beta_1} \Gamma_2^{\beta_2}
\cdots\Gamma_{11}^{\beta_{11}}, 
\qquad \text{$\beta=(\beta_1,\cdots, \beta_{11})$ is a multi-index};  \notag\\
& T_i = \omega_i \partial_t + \partial_i, \; \omega_0=-1, \; \omega_i=x_i/r, \, i=1,2,3. \label{DefT}
\end{align}
Note that $T_0=0$.
For simplicity of notation,
we define for any integer $k\ge 1$,  $\Gamma^k = (\Gamma^{\alpha})_{|\alpha|=k}$,
$\Gamma^{\le k} =(\Gamma^{\alpha})_{|\alpha|\le k}$.
In particular
\begin{align}
|\Gamma^{\le k} u | = \left(\sum_{|\alpha|\le k} |\Gamma^{\alpha} u |^2\right)^{\frac 12}.
\end{align}
Informally speaking, it is useful to think of  $\Gamma^{\le k} $ as any one of the vector
fields $ \Gamma^{\alpha}$ with $|\alpha| \le k$.

For integer $J\ge 3$, we shall denote 
\begin{align}
E_J = E_J(u(t,\cdot)) = \| (\partial \Gamma^{\le J} u)(t,\cdot) \|_{L_x^2(\mathbb R^3)}^2.
\end{align}

We shall need the following convention for multi-indices: for $\beta=(\beta_1,\cdots,\beta_{10})$ and $\alpha=(\alpha_1,\cdots,\alpha_{10})$, we denote $\beta<\alpha$ if $\beta_i\le\alpha_i $ for $i=1,\cdots,10$ and $|\beta|<|\alpha|$ (Here $|\alpha|=\sum_{i=1}^{10}\alpha_i$). Similarly we denote $\beta\le \alpha$ if $\beta_i\le\alpha_i $ for $i=1,\cdots,10$.

For any two quantities $A$, $B\ge 0$, we write  $A\lesssim B$ if $A\le CB$ for some unimportant constant $C>0$ and such $C$ may vary from line to line if not specified.
We write $A\sim B$ if $A\lesssim B$ and $B\lesssim A$. We write $A\ll B$ if
$A\le c B$ and $c>0$ is a sufficiently small constant. The needed smallness is clear from the context.

{Throughout this work we assume $t\ge 2$ if it is not specified.}

\subsection*{Decay estimates}
We collect some decay estimates in the following lemmas.

\begin{lem}[Klainerman--Sobolev]\label{KSineq}
Let $h(t,x)\in C^\infty([0,\infty)\times \mathbb{R}^3)$ and $ h(t,x)\in \mathcal{S}(\R^3)$  for every $t>0$. Then
\begin{align}\label{ksineqa}
\Lg t+|x|\Rg\Lg t-|x|\Rg^{\frac12}|h(t,x)|\lesssim  \Vert \widetilde\Gamma^{\leq 3} h(t,\cdot) \Vert_{2}\qquad \forall t>0, \ x\in\mathbb{R}^3.
\end{align}

\end{lem}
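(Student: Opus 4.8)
The bound \eqref{ksineqa} is the Klainerman--Sobolev inequality, and the plan is to prove it by a decomposition into the regions $\{r\le t/3\}$, $\{r\ge 3t\}$ and $\{t/3<r<3t\}$ (with $r=|x|$), combined in each region with a suitably (an)isotropic, rescaled Sobolev embedding. The algebraic input is that every Euclidean derivative is controlled, with a gain of $\langle t-r\rangle^{-1}$, by $\widetilde\Gamma$: since $L_0 u=t\partial_t u+r\partial_r u$ and $\Omega u=r\partial_t u+t\partial_r u$, and $\Omega=\sum_i\omega_i\Omega_{i0}$ is a combination of the $\Omega_{i0}\in\Gamma$, inverting this $2\times 2$ system gives $(r^2-t^2)\partial_t u=r\Omega u-tL_0 u$ and $(r^2-t^2)\partial_r u=rL_0 u-t\Omega u$, whence $|t-r|(|\partial_t u|+|\partial_r u|)\lesssim|\widetilde\Gamma^{\le 1}u|$; together with the trivial bound $|\partial u|\lesssim|\widetilde\Gamma^{\le 1}u|$ this yields $\langle t-r\rangle|\partial u|\lesssim|\widetilde\Gamma^{\le 1}u|$ for all $(t,x)$ with $t\ge 2$ (for the Cartesian $\partial_i$ near $r=0$, where the radial/angular splitting degenerates, one instead uses $t\partial_i u=\Omega_{i0}u-x_i\partial_t u$ and $|t-r|\sim t$ there). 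Iterating, and using that the commutators among $\partial$, $\Omega_{ij}$, $\Omega_{i0}$, $L_0$ are again such fields, gives $|\partial^a u|\lesssim\langle t-r\rangle^{-|a|}|\widetilde\Gamma^{\le|a|}u|$, and $|\partial^a u|\lesssim\langle t\rangle^{-|a|}|\widetilde\Gamma^{\le|a|}u|$ near $r=0$.

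\emph{Away from the light cone.} In $\{r\le t/3\}\cup\{r\ge 3t\}$ one has $\langle t-r\rangle\sim\langle t+r\rangle$, so the above gives $|\partial^a u(t,x')|\lesssim\langle t+r\rangle^{-|a|}|\widetilde\Gamma^{\le|a|}u(t,x')|$ for $|a|\le 2$ and all $x'$ in a ball $B(x,c\langle t+r\rangle)$ that can be arranged to lie in the same region. The rescaled Sobolev embedding $H^2\hookrightarrow L^\infty$ on this ball then gives $|u(t,x)|^2\lesssim\langle t+r\rangle^{-3}\|\widetilde\Gamma^{\le 2}u(t,\cdot)\|_2^2$, which is \eqref{ksineqa} here since $\langle t+r\rangle^{-3}\sim\langle t+r\rangle^{-2}\langle t-r\rangle^{-1}$.

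\emph{Near the light cone.} This is the main case, the only one using all of $\widetilde\Gamma$ and three vector fields. Here $r\sim\langle t+r\rangle$ while $\langle t-r\rangle$ ranges between $1$ and $\sim\langle t+r\rangle$. Use coordinates $(\rho,\theta_1,\theta_2)$, $\rho=|x|$ and $\theta$ in a chart on $S^2$ around $x/|x|$, so that $dx=\rho^2\sqrt{g(\theta)}\,d\rho\,d\theta_1\,d\theta_2$ with bounded Jacobian and the angular coordinate derivatives $\partial_{\theta_j}$ are bounded combinations of the rotations $\Omega_{kl}$ (carrying no weight). Work on the box $Q$ with $\rho$-side of length $\sim\langle t-r\rangle$ — short enough that $\langle t-\rho\rangle\sim\langle t-r\rangle$ and $\rho\sim\langle t+r\rangle$ hold on $Q$ and $Q\subset\{t/3<\rho<3t\}$ — and $\theta$-sides of length $\sim 1$. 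Applying the anisotropic Sobolev embedding $\|f\|_{L^\infty(Q)}^2\lesssim\sum_{a\in\{0,1\}^3}\ell_1^{2a_1-1}\ell_2^{2a_2-1}\ell_3^{2a_3-1}\|\partial_\rho^{a_1}\partial_{\theta_1}^{a_2}\partial_{\theta_2}^{a_3}f\|_{L^2(Q)}^2$ (three successive one-dimensional Sobolev inequalities), and using the pointwise bound $|\partial_\rho^{a_1}\partial_{\theta_1}^{a_2}\partial_{\theta_2}^{a_3}u|\lesssim\langle t-r\rangle^{-a_1}|\widetilde\Gamma^{\le a_1+a_2+a_3}u|$ on $Q$ together with $d\rho\,d\theta_1\,d\theta_2\sim\langle t+r\rangle^{-2}\,dx$ on $Q$, one finds that every term on the right contributes $\langle t-r\rangle^{-1}\langle t+r\rangle^{-2}\|\widetilde\Gamma^{\le 3}u(t,\cdot)\|_2^2$, so $|u(t,x)|\lesssim\langle t+r\rangle^{-1}\langle t-r\rangle^{-1/2}\|\widetilde\Gamma^{\le 3}u(t,\cdot)\|_2$.

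Combining the three regions gives \eqref{ksineqa}. The main obstacle is the cone region: one must choose the anisotropic box so that $\langle t-\rho\rangle\sim\langle t-r\rangle$ throughout $Q$ while $Q$ stays inside $\{t/3<\rho<3t\}$, deal with $S^2$ not being a global product chart by working in a single chart around $x/|x|$, check that the rotations commute well enough with $L_0,\Omega,r,t$ that the gain $\langle t-r\rangle^{-1}$ survives composition with angular derivatives, and track the weights so that exactly the factor $\langle t-r\rangle^{1/2}$ — and no more — is produced. Note that it is precisely the scaling field $L_0$ (together with the $\Omega_{i0}$) that makes the $2\times 2$ system invertible and hence yields the crucial radial gain $\langle t-r\rangle|\partial_\rho u|\lesssim|\widetilde\Gamma^{\le 1}u|$; the two off-cone cases are routine once this identity is in hand.
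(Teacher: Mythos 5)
The paper does not prove Lemma~\ref{KSineq} at all; it simply cites Lemma~2.3 of \cite{CLLX} (a two--dimensional analogue). So there is no in-paper argument to compare against, and your proposal must be judged on its own. Your approach is the classical Klainerman argument: split into interior, exterior and near-cone regions, establish the weighted gradient bound $\langle t-r\rangle|\partial u|\lesssim|\widetilde\Gamma^{\le 1}u|$, and finish with (an)isotropically rescaled Sobolev embeddings. The near-cone anisotropic box argument is set up correctly, the weight bookkeeping lands exactly on $\langle t-r\rangle^{-1/2}\langle t+r\rangle^{-1}$, and three vector fields are exactly what the $a\in\{0,1\}^3$ anisotropic Sobolev consumes. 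The strategy is sound.

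There is, however, one step that does not close as written: the pointwise bound $\langle t-r\rangle|\partial_i u|\lesssim|\widetilde\Gamma^{\le 1}u|$ for the Cartesian derivatives in the interior region $\{r\le t/3\}$. The $2\times 2$ $L_0$--$\Omega$ inversion gives this only for $\partial_t$ and $\partial_r$. For the angular part of $\partial_i=\omega_i\partial_r+r^{-1}(\text{tangential})$, the rotations $\Omega_{jk}$ carry only a factor $r^{-1}$, and $r^{-1}\lesssim\langle t-r\rangle^{-1}$ requires $r\gtrsim\langle t-r\rangle$, which fails throughout the slab $\{1\lesssim r\le t/3\}$, not merely ``near $r=0$''. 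Your parenthetical repair $t\partial_i u=\Omega_{i0}u-x_i\partial_t u$, by itself, also does not close there: it gives $t|\partial_iu|\le|\Omega_{i0}u|+r|\partial_tu|$, and when $r\sim t/3$ the term $r|\partial_tu|$ is of size $t|\partial_tu|$, which is not $\lesssim|\widetilde\Gamma^{\le 1}u|$. The fix is to invoke this identity on the whole interior region and to first substitute the already-established $|\partial_t u|\lesssim|t-r|^{-1}|\widetilde\Gamma^{\le 1}u|$, so that $r|\partial_tu|\le\frac{r}{|t-r|}|\widetilde\Gamma^{\le 1}u|\lesssim|\widetilde\Gamma^{\le 1}u|$ when $r\le t/3$; equivalently, invert the full $4\times 4$ system $(L_0,\Omega_{10},\Omega_{20},\Omega_{30})^T=A(\partial_t,\partial_1,\partial_2,\partial_3)^T$, whose eigenvalues are $t+r,\ t-r,\ t,\ t$, so $\|A^{-1}\|\lesssim\langle t-r\rangle^{-1}$ whenever $r\lesssim t$. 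With that adjustment the interior rescaled-Sobolev step, and hence your proof, goes through.
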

\begin{proof}
    The proof is standard. One can find it in Lemma 2.3 of \cite{CLLX} for example. We also emphasize that \eqref{ksineqa} involves $L_0$ that may lead to trouble when applied to the KGZ system, so that one must be carefully examining $L_0$ terms. 
\end{proof}

\begin{lem}
For $x\in \R^3$ with $|x|\le\frac t2$, $t\ge 1$, we have for $u\in \mathcal{S}(\R^3)$ 
\begin{align}\label{t34}
t^{\frac 34}|u(t,x)|\lesssim \|\partial^{\le 1}\Gamma^{\le 2} u\|_{L_x^2(\R^3)}
\end{align}
where $\pa=\pa_t,\pa_1,\cdots,\pa_3$.
\end{lem}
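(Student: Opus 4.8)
The plan is to split the interior region $\{|x|\le t/2\}$ into a ``core'' $\{|x|\le\sqrt t\}$ and a ``shell'' $\{\sqrt t\le |x|=:R\le t/2\}$ and to handle them by two different devices; write $A:=\|\partial^{\le1}\Gamma^{\le2}u\|_{L^2_x(\mathbb R^3)}$. For $t$ in any fixed bounded range the bound is immediate, since $H^2(\mathbb R^3)\hookrightarrow L^\infty$ and $\partial^{\le2}u$ is among $\Gamma^{\le2}u$, so $|u(t,x)|\lesssim A$; hence I assume $t$ large.

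For the core I would exploit the Lorentz boosts $\Omega_{i0}=t\partial_i+x_i\partial_t$ to trade undifferentiated quantities for energy: from $t\partial_i=\Omega_{i0}-x_i\partial_t$ and the fact that $|x_i|\le\sqrt t$ on $B_{\sqrt t}:=B_{\sqrt t}(0)$, a short induction gives $\|\partial_x^\alpha\partial_t^\beta u\|_{L^2(B_{\sqrt t})}\lesssim t^{-|\alpha|/2}A$ for $|\alpha|+\beta\le2$; the only quantity that cannot be improved this way is $\partial_t^2u$, but it need not be. In particular $\|\nabla u\|_{L^2(B_{\sqrt t})}\lesssim t^{-1/2}A$ and $\|\nabla^2u\|_{L^2(B_{\sqrt t})}\lesssim t^{-1}A$, and plugging these together with $\|u\|_{L^2}\le A$ into the rescaled Sobolev inequality $\|u\|^2_{L^\infty(B_\rho)}\lesssim\sum_{j\le2}\rho^{2j-3}\|\nabla^ju\|^2_{L^2(B_\rho)}$ at $\rho=\sqrt t$ makes the three terms equal to $t^{-3/2}A^2$, giving $|u(t,x)|\lesssim t^{-3/4}A$ on $\{|x|\le\sqrt t\}$; the balance of the three terms is exactly what fixes the exponent $3/4$ here.

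For the shell I would combine angular and radial information. First apply the \emph{fractional} Sobolev embedding $H^{1+\varepsilon}(\mathbb S^2)\hookrightarrow L^\infty(\mathbb S^2)$ to $\omega\mapsto u(t,R\omega)$ — spending only the rotations $\Omega_{ij}$ — to obtain $|u(t,x)|\lesssim\|u(t,R\cdot)\|_{H^{1+\varepsilon}(\mathbb S^2)}$, and then apply the one–dimensional Sobolev inequality in the radial variable to $R\mapsto\|u(t,R\cdot)\|_{H^{1+\varepsilon}(\mathbb S^2)}$. The radial derivative produced by the last step is rewritten via $t\partial_r=\Omega-r\partial_t$ with $\Omega=r\partial_t+t\partial_r=\sum_i\omega_i\Omega_{i0}$; here $\Omega$ applied to at most one rotation of $u$ lands in $\Gamma^{\le2}u$, while $\Omega$ applied to two rotations costs at worst a factor $t$ in the weighted norm $\big(\int_{|x|>R}|\cdot|^2|x|^{-2}dx\big)^{1/2}$, so interpolating these two cases at order $1+\varepsilon$ keeps the loss to $t^{\varepsilon}$, which is absorbed by the weight $|x|^{-2}\le R^{-2}$ together with $R\ge\sqrt t$. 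The outcome is $|u(t,x)|\lesssim(Rt)^{-1/2}A\le t^{-3/4}A$, which together with the core estimate (the two regimes overlapping, so they cover all of $\{|x|\le t/2\}$) finishes the proof.

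The step I expect to be the real obstacle is this last one. A naive $H^2(\mathbb S^2)\hookrightarrow L^\infty(\mathbb S^2)$ would spend two rotations on $u$, and $\Omega^2u$ cannot be improved in the radial direction by the boosts — that would require three vector fields, beyond the reach of $\partial^{\le1}\Gamma^{\le2}$ — so one only recovers the weaker $|u(t,x)|\lesssim R^{-1}A$, which fails to give $t^{-3/4}$ in the window $\sqrt t\le R\le t^{3/4}$. Passing to the fractional embedding is exactly what frees up enough ``differentiation budget'' to let the Lorentz boost act in the radial variable and produce the sharp $(Rt)^{-1/2}$, hence $t^{-3/4}$. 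The remaining points — the induction in the core and the tracking of the $t^{\varepsilon}$ losses in the shell — are routine.
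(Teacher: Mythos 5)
The paper does not prove this lemma; it simply cites Lemma~2.4 of Georgiev~\cite{G1992}. Your proposal is therefore a self-contained reconstruction rather than something to match against an argument in the text. As such, the overall architecture is sound and does recover the stated estimate.

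The core argument is clean and correct: on $B_{\sqrt t}(0)$ one has $|x_i|\le \sqrt t$, so $t\partial_i=\Omega_{i0}-x_i\partial_t$ gives $\|\nabla^j u\|_{L^2(B_{\sqrt t})}\lesssim t^{-j/2}A$ for $j\le 2$ (using only $\partial^{\le1}\Gamma^{\le 2}u$ after commutators like $[\Omega_{i0},\partial_t]=-\partial_i$), and the local Sobolev inequality at scale $\rho=\sqrt t$ makes all three terms equal to $t^{-3/2}A^2$. The choice $\rho=\sqrt t$ is precisely what balances $\rho^{-3}\|u\|_2^2$ against $\rho\,\|\nabla^2u\|_2^2\lesssim\rho\,\rho^4 t^{-4}A^2$, and this is where the exponent $3/4$ is fixed.

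The shell argument is the delicate part, and your outline is essentially right but compressed; let me record how the bookkeeping actually closes. Writing $F(\rho)=\|u(t,\rho\cdot)\|_{H^{1+\varepsilon}(\mathbb S^2)}$, the change of variables $d\rho\,d\omega=|x|^{-2}\,dx$ turns the 1D Sobolev inequality $F(R)^2\lesssim\|F\|_{L^2([R,\infty))}\|F'\|_{L^2([R,\infty))}$ into weighted $L^2$ norms over $\{|x|>R\}$. Then $\|F\|_{L^2}\lesssim R^{-1}A$ is immediate from $|x|^{-2}\le R^{-2}$, while for $F'$ one uses $\partial_r=t^{-1}(\Omega-r\partial_t)$ with $\Omega=\sum_i\omega_i\Omega_{i0}$; the $r\partial_t$ piece cancels the weight $|x|^{-1}$ exactly, giving $\|\partial_t\Lambda^{1+\varepsilon}_\omega u\|_{L^2(\mathbb R^3)}\lesssim A$ with no restriction on $\rho$ (this is why there is no genuine difficulty at $\rho\gg t$, contrary to what one might fear at first). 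For the $\Omega$ piece you interpolate between $\|\Omega\Lambda^1_\omega u\|_{L^2(|x|^{-2}dx)}\lesssim R^{-1}A$ (two vector fields, direct) and $\|\Omega\Lambda^2_\omega u\|_{L^2(|x|^{-2}dx)}\lesssim (t/R)A$ (split $\Omega$ again), giving $R^{-1}t^{\varepsilon}A$; since $R\ge\sqrt t$, $R^{-1}t^{\varepsilon}\le t^{-1/2+\varepsilon}\ll 1$, so $\|F'\|_{L^2}\lesssim t^{-1}A$ and $|u(t,R\omega)|\lesssim (Rt)^{-1/2}A\le t^{-3/4}A$ as claimed. Your description (``the weight $|x|^{-2}\le R^{-2}$ together with $R\ge\sqrt t$'') compresses this into one clause; when you write the proof out you should make explicit (i) the weighted-$L^2$ reformulation of the radial Sobolev step, (ii) that $\Lambda_\omega$ commutes with $\partial_r$, $\partial_t$, $r$, $t$, and (up to bounded coefficients from differentiating $\omega_i$) with $\Omega$, and (iii) the Hölder/interpolation argument in $\rho$ justifying $\|\Lambda^{1+\varepsilon}_\omega f\|_{L^2(|x|^{-2}dx)}\le\|\Lambda^1_\omega f\|^{1-\varepsilon}\|\Lambda^2_\omega f\|^{\varepsilon}$. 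With those made precise, the proof is complete.

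One small remark: the observation that a naive $H^2(\mathbb S^2)$ embedding would require three vector fields and only yield $|u|\lesssim R^{-1}A$ — insufficient in the window $\sqrt t\le R\le t^{3/4}$ — is exactly the right diagnosis; the fractional embedding is what frees up the budget, and this is the genuinely nontrivial part of the lemma.
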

\begin{proof}
We refer the readers to Lemma 2.4 in \cite{G1992}.
\end{proof}

\begin{lem}

For $|x|\ge \frac t2$ and $t\ge 1$, we have the following estimate for $u\in \mathcal{S}(\R^3)$:
\begin{align}\label{t1}
\langle t-|x|\rangle^{\frac12}\langle t+|x|\rangle|u(t,x)|\lesssim \|\Gamma^{\le 2} u\|_{2}+\|\langle t-|\cdot|\rangle\nabla \Gamma^{\le 2}u\|_2.
\end{align}
\end{lem}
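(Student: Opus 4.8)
The plan is to reduce the estimate to a one-dimensional weighted inequality along rays, so that no space--time scaling field ($\Omega_{i0}$ or $L_0$) enters. First I would use that on $\{|x|\ge t/2\}$ with $t\ge1$ one has $|x|\ge\frac12$, hence $\Lg t+|x|\Rg\le 1+t+|x|\le 1+3|x|\le 5|x|$ while $\Lg t+|x|\Rg\ge|x|$, so that $\Lg t+|x|\Rg\sim|x|$; it is then enough to bound $\Lg t-|x|\Rg\,|x|^{2}\,|u(t,x)|^{2}$ by the square of the right-hand side. Freezing $t$ and writing $x=\rho\omega$ with $\rho=|x|\ge t/2$ and $\omega\in\mathbb{S}^2$ (legitimate since $\rho$ stays away from $0$), I apply the Sobolev embedding $H^2(\mathbb{S}^2)\hookrightarrow L^\infty(\mathbb{S}^2)$ on each sphere $\{|x|=\rho\}$. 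Because the rotation field $\Omega_{ij}$ acts on $u(t,\rho\,\cdot)$ exactly as the corresponding rotation of $\mathbb{S}^2$ (rotations commute with dilations), the $H^2(\mathbb{S}^2)$-norm is controlled by the $\Omega_{12},\Omega_{13},\Omega_{23}$ derivatives, yielding the pointwise bound $|u(t,\rho\omega)|^{2}\lesssim G(\rho):=\int_{\mathbb{S}^2}|\Omega^{\le2}u(t,\rho\omega')|^{2}\,d\omega'$. The whole argument uses only spatial vector fields, so the full $\Ga^{\le2}$ appearing on the right is in fact more than needed.

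Next I would set up a radial identity, integrating inward \emph{from} $\rho=+\infty$. Since $u(t,\cdot)\in\mathcal S(\R^3)$, the quantity $\Lg t-\rho\Rg\rho^{2}G(\rho)$ tends to $0$ as $\rho\to\infty$, whence
\[
\Lg t-r\Rg\,r^{2}\,G(r)=-\int_r^\infty\pa_\rho\big(\Lg t-\rho\Rg\,\rho^{2}\,G(\rho)\big)\,d\rho .
\]
Expanding the $\rho$-derivative gives three terms: one from $\pa_\rho\Lg t-\rho\Rg=(\rho-t)/\Lg t-\rho\Rg$, one from $\pa_\rho(\rho^{2})$, and one from $G'(\rho)$. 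The key point is that the second appears inside the integral as $-2\rho\Lg t-\rho\Rg G(\rho)\le0$ and hence may be discarded; this favorable sign is exactly why one integrates from $+\infty$ and not from a finite left endpoint, and it is the device that replaces the missing scaling operator. The first term is $\le\int_r^\infty\rho^{2}G(\rho)\,d\rho$ since $|\rho-t|/\Lg t-\rho\Rg\le1$, and undoing polar coordinates this is $\le\|\Omega^{\le2}u(t,\cdot)\|_{2}^{2}\le\|\Ga^{\le2}u\|_{2}^{2}$. For the third term, $G'(\rho)=2\int_{\mathbb{S}^2}\Omega^{\le2}u\cdot\pa_\rho(\Omega^{\le2}u)\,d\omega'$ with $|\pa_\rho(\Omega^{\le2}u)|=|\omega'\cdot\na(\Omega^{\le2}u)|\le|\na\Ga^{\le2}u|$ (the rotations being among the $\Ga_j$, so $|\na\Omega^{\le2}u|\le|\na\Ga^{\le2}u|$); writing $\Lg t-\rho\Rg\rho^{2}=\rho\cdot\big(\Lg t-\rho\Rg\rho\big)$ and using Cauchy--Schwarz in $d\rho\,d\omega'$ bounds it by $\|\Ga^{\le2}u\|_{2}\cdot\|\Lg t-|\cdot|\Rg\na\Ga^{\le2}u\|_{2}$.

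Combining the two surviving contributions yields $\Lg t-r\Rg\,r^{2}\,|u(t,r\omega)|^{2}\lesssim\big(\|\Ga^{\le2}u\|_{2}+\|\Lg t-|\cdot|\Rg\na\Ga^{\le2}u\|_{2}\big)^{2}$; taking square roots and invoking $\Lg t+r\Rg\sim r$ on $\{|x|\ge t/2\}$ gives \eqref{t1}. I expect the only genuine difficulty to be conceptual — extracting the weight $\Lg t-r\Rg$ without $\Omega_{i0}$ or $L_0$ — and it is resolved precisely by the ``integrate from infinity, discard the favorably signed term'' trick; everything else (the $\mathbb{S}^2$-Sobolev step, the commutation $|\na\Omega^{\le2}u|\lesssim|\na\Ga^{\le2}u|$, and the vanishing of the boundary term at $\rho=\infty$ by Schwartz decay) is routine.
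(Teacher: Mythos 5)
Your proof is correct, and it follows the same broad outline as the paper's: reduce to a Sobolev embedding on spheres (controlled by the rotation fields, which are among the $\Gamma_i$) followed by a one-dimensional fundamental-theorem-of-calculus argument in the radial variable $\rho$. The difference lies in how the radial step is organized. The paper first splits into $|t-r|\le 1$ and $|t-r|\ge 1$; in the latter case it works with the bare weight $|t-\rho|$ and integrates from $\rho=t$ to $\rho=r$, so the boundary term vanishes automatically because the weight vanishes at $\rho=t$, and the term coming from $\partial_\rho(\rho^2)$ is absorbed by noting $|t-\rho|\le\rho$ on that interval. You instead keep the Japanese bracket $\langle t-\rho\rangle$ uniformly, integrate from $\rho=\infty$ (boundary term vanishes by Schwartz decay), and discard the non-negative contribution $\int_r^\infty 2\rho\langle t-\rho\rangle G\,d\rho$ which appears with a minus sign; this makes the case split unnecessary. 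Both devices are sound and yield the same right-hand side; yours is a mild streamlining, while the paper's treatment makes the absorption of the $\partial_\rho(\rho^2)$ term slightly more transparent (no sign considerations). The observations about $\langle t+|x|\rangle\sim|x|$ on $\{|x|\ge t/2,\,t\ge1\}$, the commutation $[\partial_r,\Omega_{ij}]=0$, and the bound $|\rho-t|/\langle t-\rho\rangle\le1$ are all correct and are precisely what is needed.
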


\begin{proof}
We write $u(t,x)$ in the spherical coordinates as $v(t,r,\theta,\varphi)$ with $x=(r\cos\theta\cos\varphi,r\cos\theta\sin\varphi,r\sin\theta)$. Then we have 
$|u|\lesssim \|\partial_{\theta}^{\le 1}\partial_{\varphi}^{\le 1} v\|_{L_{\theta}^2L_{\varphi}^2}$.
If $|t-r|\le 1$, we have 
\begin{align}
\langle t-r\rangle t^2|u|^2\lesssim t^2|u(t,x)|^2
\lesssim&\ t^2\int_{r}^{\infty}|\partial_\rho(v(t,\rho,\theta,\varphi)^2)|\ d\rho
\\\lesssim& \int_{r}^{\infty}|v(t,\rho,\theta,\varphi)||\partial_\rho v(t,\rho,\theta,\varphi)|\ \rho^2d\rho\qquad (\text{by } r\ge\frac t2)
\\\lesssim& \|\partial_{\theta}^{\le 1}\partial_{\varphi}^{\le 1} v\|_{2}^2+\|\partial_{\theta}^{\le 1}\partial_{\varphi}^{\le 1}\partial_\rho  v\|_{2}^2
\\\lesssim& \|\Gamma^{\le 2}u\|_{2}^2+\|\nabla \Gamma^{\le 2}u\|_{2}^2.
\end{align}
Now we assume $|t-r|\ge 1$. We use the Sobolev inequality: for any $h\in \mathcal S(\R)$, we have for $\frac t2\le r< t$
\begin{align}
(t-r)r^2|h(r)|^2\lesssim &\int_{r}^t|\partial_\rho \left((t-\rho)\rho^2|h(\rho)|^2\right)|d\rho
\\ \lesssim&\int_{r}^t|h(\rho)|^2 \rho^2d\rho+\int_{r}^t|t-\rho||h(\rho)|^2\rho d\rho+\int_{r}^t |t-\rho|\rho^2|\partial_\rho h(\rho)||h(\rho)|d\rho
\\ \lesssim&\int_{r}^t|h(\rho)|^2 \rho^2d\rho+\int_{r}^t|t-\rho|^2|\partial_\rho h(\rho)|^2 \rho^2d\rho.
\end{align}
For $r>t$, one gets
\begin{align}
(r-t)r^2|h(r)|^2\lesssim &\int_{t}^r|\partial_\rho \left((\rho-t)\rho^2|h(\rho)|^2\right)|d\rho
\\ \lesssim&\int_{t}^r|h(\rho)|^2 \rho^2d\rho+\int_{t}^r|t-\rho||h(\rho)|^2\rho d\rho+\int_{t}^r |t-\rho|\rho^2|\partial_\rho h(\rho)||h(\rho)|d\rho
\\ \lesssim&\int_{t}^r|h(\rho)|^2 \rho^2d\rho+\int_{t}^r|t-\rho|^2|\partial_\rho h(\rho)|^2 \rho^2d\rho.
\end{align}
These imply that (after applying $|u|\lesssim \|u\|_{H^2(\mathbb{S}^2)}$)
\begin{align}
\langle t-r\rangle^{\frac12} t|u(t,x)|\lesssim \|\Gamma^{\le 2}u\|_{2}+\|\langle t-|\cdot|\rangle\nabla \Gamma^{\le 2}u\|_{2}.
\end{align}
\end{proof}

\begin{lem}\label{lem2.6}
Suppose $u=u(t,x)$ has continuous second order derivatives. Then for  $t>0$ and $r=|x|$ we have
\begin{align}\label{2.260}
&|\langle t-r\rangle\partial_{tt} u|+|\langle t-r\rangle\nabla \partial_t u|+|\langle t-r\rangle\nabla^2 u|
 \lesssim  |\partial \Gamma^{\le 1} u|+\langle t+r\rangle|\Box u|.
\end{align}
\end{lem}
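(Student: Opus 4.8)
The estimate \eqref{2.260} is the standard ``elliptic gain near the light cone'' identity: the point is that $\Box u = \partial_{tt}u - \Delta u$, together with the rotation vector fields $\Omega_{ij}$ and the Lorentz boosts $\Omega_{i0}$, spans enough of the second-order derivatives that one can solve algebraically for $\partial_{tt}u$, $\nabla\partial_t u$ and $\nabla^2 u$ with coefficients that are bounded by $\Lg t+r\Rg/\Lg t-r\Rg$ (away from $r=0$) and are trivially controlled when $r$ is small. First I would split into the two regimes $r\le t/2$ (equivalently any region where $\Lg t-r\Rg \sim \Lg t+r\Rg$) and $r \ge t/2$. In the first regime $\Lg t-r\Rg \sim \Lg t+r\Rg \sim \Lg t\Rg$, so \eqref{2.260} is immediate: $\Lg t-r\Rg |\partial^2 u| \lesssim \Lg t+r\Rg|\partial\Ga^{\le 1}u|$-type bounds hold just by noting $\Lg t-r\Rg |\nabla^2 u| \le \Lg t-r\Rg\,|\partial(\nabla u)|$ and $\Lg t-r\Rg \sim \Lg t+r\Rg$, while $\partial_{tt}u = \Box u + \Delta u$ absorbs the $\Lg t+r\Rg|\Box u|$ term. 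So the substantive case is $r \ge t/2$, where $r \sim \Lg t+r\Rg$ and we may freely divide by $r$.

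In the regime $r\ge t/2$, I would use the well-known decompositions of $\partial_t$ and $\partial_i$ in terms of the ``good'' derivatives $T_i = \omega_i\partial_t + \partial_i$ and the vector fields. Concretely, from $\Omega_{i0} = t\partial_i + x_i\partial_t$ one gets $\partial_i = \frac{1}{r}\big(\tfrac{x_i}{t}\Omega - \text{(rotation terms)}\big) + \cdots$; more usefully, on the region $r\sim t+r$ one has the identities expressing $\partial_t$ and $\partial_r$ modulo $\frac{1}{r}\Ga$-terms, namely $\partial_t = \frac{t}{t^2-r^2}(\cdots)$ — but since $t^2 - r^2$ can degenerate we instead keep the weight $\Lg t-r\Rg$ and write, schematically, $(t-r)\partial_t u$ and $(t+r)$-weighted combinations of $\Omega_{i0}u$ and $\Omega_{ij}u$. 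The cleanest route: use that for $r\ne 0$,
\begin{align}
(t-r)\partial_t u &= \frac{1}{r}\Big( t\, x_i \partial_i u - r\, \Omega_{i0}u\Big)\cdot\frac{x_i}{r} + \text{lower order},\notag
\end{align}
and similar identities, to reduce $\Lg t-r\Rg\partial u$ to $\Lg t+r\Rg\,r^{-1}\Ga^{\le 1}u$ which is just $|\partial\Ga^{\le 1}u|$ after observing $r^{-1}\Ga u$ is again of the form $\partial u$ or $\partial(\Ga u)$ when $r\gtrsim t$. Iterating once more on the second derivatives — i.e. applying the same first-order reduction to $\partial_t u$, to $\partial_i u$, and to the components of $\nabla u$ — produces the three second-derivative quantities on the left of \eqref{2.260} in terms of $\partial\Ga^{\le 1}u$, with the only ``bad'' term being the one where both derivatives land badly, which is precisely $\partial_{tt}u$; there we substitute $\partial_{tt}u = \Box u + \Delta u$ and move $\Delta u = \nabla^2 u$ (already estimated) and $\Box u$ (weighted by $\Lg t+r\Rg$) to the right-hand side.

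The one genuinely delicate bookkeeping point — and the step I expect to be the main obstacle — is handling the commutators and the $1/r$ factors uniformly: when one writes $\partial_i$ or $\partial_{tt}$ in terms of $\Omega$, $\Omega_{i0}$, $\Omega_{ij}$ one picks up both factors of $1/r$ and first-order vector fields hitting the $\omega_i$ coefficients, and one must check that after the dust settles every term carries the correct weight, i.e. is bounded by $|\partial\Ga^{\le 1}u| + \Lg t+r\Rg|\Box u|$ and not, say, $\Lg t+r\Rg|\partial\Ga^{\le 1}u|$. This is why restricting to $r\ge t/2$ first is essential: there $1/r \lesssim 1/\Lg t+r\Rg$, so each explicit $1/r$ exactly cancels one power of the $\Lg t+r\Rg$ weight that would otherwise appear, and $\omega_i$ and its angular derivatives $\Omega_{ij}\omega_k$ are bounded. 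Once the algebra is organized this way the estimate follows; I would present it as two or three displayed identities in the region $r\ge t/2$ followed by the trivial remark covering $r\le t/2$.
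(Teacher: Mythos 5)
There is a genuine gap: you dismiss the interior region $r\le t/2$ as trivial, but that region is in fact where the content of the lemma lies. You assert that \eqref{2.260} is ``immediate'' there because $\langle t-r\rangle\sim\langle t+r\rangle$, and you write the target as $\langle t-r\rangle|\partial^2 u|\lesssim\langle t+r\rangle|\partial\Gamma^{\le 1}u|$. But the right-hand side of \eqref{2.260} is $|\partial\Gamma^{\le 1}u| + \langle t+r\rangle|\Box u|$; there is no $\langle t+r\rangle$ weight on the first term. In the interior, where $\langle t-r\rangle\sim\langle t\rangle$, the required estimate is $\langle t\rangle|\nabla^2 u|\lesssim|\partial\Gamma^{\le 1}u| + \langle t\rangle|\Box u|$, so you must gain a full factor of $\langle t\rangle$ over the trivial pointwise bound $|\nabla^2 u|\le|\partial\nabla u|$. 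Your one-line chain $\langle t-r\rangle|\nabla^2 u|\le\langle t-r\rangle|\partial\nabla u|$ does not accomplish this; it leaves the uneaten $\langle t\rangle$ sitting on the right.

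The mechanism for that gain is the boost algebra, which the paper deploys precisely in the interior (its Case~1 covers $r\le 2t$). From $\Omega_{i0}=t\partial_i + x_i\partial_t$ one has $\partial_j\Omega_{i0}u = \delta_{ij}\partial_t u + x_i\partial_t\partial_j u + t\partial_{ij}u$; solving for $t\partial_{ij}u$ and using $|x_i|\le r\lesssim t$ yields the factor of $t$. Combined with the identity $(t^2-r^2)\Delta u = r^2\Box u + r\partial_r u - 3t\partial_t u - x_i\partial_t\Omega_{i0}u + t\partial_i\Omega_{i0}u$ and with $\partial_{tt}u=\Box u+\Delta u$, this produces all three quantities on the left of \eqref{2.260} in the interior. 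For the exterior $r\ge t/2$ the paper uses $\Omega=r\partial_t + t\partial_r$ together with $\Delta=\partial_{rr}+\frac{2}{r}\partial_r+\frac{1}{r^2}\Delta_{\mathbb{S}^2}$ to solve for $(t^2-r^2)\partial_{rr}u$ and $(t^2-r^2)\partial_t\partial_r u$ in terms of $\Omega\partial u$, $\partial\Gamma^{\le 1}u$ and $\Box u$; your exterior sketch is in the same spirit, though it defers exactly the bookkeeping you flag as the obstacle. The more serious problem is that the proposal inverts the difficulty, treating the region that genuinely needs the boost identities as the trivial one.
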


\begin{proof}
It is clear that \eqref{2.260} holds for $|t-r|\le 1$. We then assume $|t-r|\ge 1$.

\texttt{Case 1. $r\le 2t$.} 
Recalling $\Omega_{i0}u=x_i\partial_t u+t\partial_i u$, we have for $x\in\R^3$
\begin{align}\label{2.290}
&\partial_t\Omega_{i0}u= x_i\partial_{tt}u+\partial_i u+t\partial_t\partial_i u,\\
&\partial_i\Omega_{i0}u= 3\partial_{t}u+r\partial_r\partial_t u+t\Delta u.  
\end{align}
(Note we used Einstein summation here).
This implies that 
\begin{align}
&(t^2-r^2)\Delta u =r^2\Box u+r\partial_r u-3t\partial_t u-x_i\partial_t\Omega_{i0}u+t\partial_i\Omega_{i0}u\qquad(\text{by }\Box =\partial_{tt}-\Delta), \label{2.300}\\
\implies & |(t-r) \Delta u| \lesssim r|\Box u|+|\partial\Gamma^{\le 1} u|,\\
\implies &|( t-r)\partial_{tt} u| \le |( t-r)(\Box u+\Delta u)|
\lesssim  (t+r)|\Box u|+|\partial\Gamma^{\le 1} u|.\label{2.330}
\end{align}
Then we focus on $\partial_{t}\nabla u$ and $\nabla^2 u$.  From \eqref{2.290} we know that for $1\le i\le 3$
\begin{align}
&\partial_i \partial_tu=\frac{1}{t}\partial_t \Omega_{i0}u-\frac1t\partial_i u-\frac{x_i}t\partial_{tt} u,\\
\implies &|(t-r)\partial_i \partial_tu|\lesssim|\partial_t \Omega_{i0}u|+|\partial_i u|+|(t-r)\partial_{tt} u|\lesssim |\partial\Gamma^{\le 1}u|+(t+r)|\Box u| \qquad (\text{by \eqref{2.330}}).\label{2.350}
\end{align}
By the definition of $\Omega_{i0}u=x_i\partial_{t}u+t\partial_i u$, we have for $1\le i,j\le 3$
\begin{align}
\partial_{j}\Omega_{i0}u=&\ \delta_{ij}\partial_{t}u+x_{i}\partial_{t}\partial_{j}u+t\partial_{ij} u,
\\ \implies |(t-r)\partial_{ij}u|=&\frac {(t-r)}t( \partial_{j}\Omega_{i0}u-\delta_{ij}\partial_{t}u-x_{i}\partial_{t}\partial_{j}u)\lesssim |\partial \Gamma^{\le 1}u|+|(t-r)\partial_{t}\partial_ju|
\\ \lesssim &|\partial \Gamma^{\le 1}u|+(t+r)|\Box u|\qquad(\text{by }\eqref{2.350}).
\end{align}
\texttt{Case 2. $r\ge \frac t2$}.  Recall that $\Omega=t\partial_r+r\partial_t$. Therefore we have
\begin{align}
\Omega \partial_{t}u=t\partial_r\partial_{t}u+r\partial_{tt}u,\qquad
\Omega \partial_{r}u=t\partial_{rr}u+r\partial_t\partial_{r}u.
\end{align}
Then we have 
\begin{align}
(t^2-r^2)\partial_t\partial_r u=&\ t\Omega\partial_t u-r\Omega\partial_r u-tr(\partial_{tt}u-\partial_{rr}u)
\\=&\ t\Omega\partial_t u-r\Omega\partial_r u-tr(\Box u+\frac 2 r\partial_{r}u+\frac1{r^2}\Delta_{\mathbb{S}^2} u) \quad (\text{ by }\Delta =\partial_{rr}+\frac 2r\partial_r+\frac{\Delta_{\mathbb{S}^2}}{r^2}),
\\ \implies |(t-r)\partial_{t}\partial_r u|\lesssim& |\partial\Gamma^{\le 1} u|+(t+r)|\Box u|.
\end{align}
Here $\Delta_{\mathbb{S}^2}$ is the Laplace-Beltrami operator on the sphere.
{Therefore we get $|( t-r)\partial_{t}\nabla u|\lesssim |\partial\Gamma^{\le 1} u|+(t+r)|\Box u|$.}
On the other hand, we have 
\begin{align}
&t^2\partial_{rr}u-r^2\partial_{tt}u=t\Omega\partial_r u-r\Omega\partial_t u,
\\ \implies& (t^2-r^2)\partial_{rr}u=2r\partial_r u+\Delta_{\mathbb S^2}u+r^2\Box u+t\Omega\partial_r u-r\Omega\partial_t u,
\\ \implies&|( t-r)\partial_{rr}u|\lesssim |\partial\Gamma^{\le 1} u|+(t+r)|\Box u|. \label{2.420}
\end{align}
This implies that $\langle t-r\rangle|\Delta u|$ and $\langle t-r\rangle|\nabla^2 u|$ can be controlled by RHS of \eqref{2.420}. Furthermore we use the identity $\partial_{tt}u =\Delta u+\Box u$ and deduce that $\langle t-r\rangle|\partial_{tt} u|\lesssim |\partial\Gamma^{\le 1} u|+(t+r)|\Box u|$. Thus we complete the proof.

\end{proof}

\begin{lem}
Assume $h\in \mathcal{S}(\R^3)$. Then
\begin{align}\label{S-2}
  \|h\|_{2}\lesssim \|\Lg x\Rg\nabla h\|_{2}.
\end{align}
\end{lem}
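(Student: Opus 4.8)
The plan is to prove the Hardy-type inequality $\|h\|_2 \lesssim \|\langle x\rangle \nabla h\|_2$ in $\R^3$. First I would reduce to showing $\|\,|x|^{-1} h\|_2 \lesssim \|\nabla h\|_2$ on $\R^3$, i.e.\ the classical Hardy inequality with sharp constant $2/(d-2) = 2$ in dimension $d=3$. Indeed, once this is available, split $\R^3 = \{|x|\le 1\}\cup\{|x|\ge 1\}$: on the inner region $\langle x\rangle \sim 1 \gtrsim |x|$ so that $|h| \lesssim \langle x\rangle |x|^{-1}|h|$ pointwise there; on the outer region $\langle x\rangle \sim |x|$, again $|h|\le |x|^{-1}|x|\,|h| \lesssim |x|^{-1}\langle x\rangle|h|$. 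Hence in both regions the function $|x|^{-1}|h|$ is the right intermediary. Actually it is cleanest to just observe $1 \le \langle x\rangle / |x| \cdot \langle x\rangle$... let me instead simply bound $\|h\|_2^2 \le \| \,|x|^{-1}h\|_2 \cdot \| |x| h\|_2 \le \||x|^{-1}h\|_2 \cdot \|\langle x\rangle h\|_2$ by Cauchy--Schwarz, but this reintroduces $\|\langle x\rangle h\|_2$ which is not what we want; so the region-splitting argument above is the correct route and gives $\|h\|_2 \lesssim \| |x|^{-1} h\|_{L^2(|x|\le 1)} + \||x|^{-1}\langle x\rangle h\|_{L^2(|x|\ge 1)} \lesssim \| |x|^{-1}h\|_2 + \|\langle x\rangle \nabla h\|_2$ once we also control the first term.

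Second, I would establish the scalar Hardy inequality itself. For $h \in \mathcal{S}(\R^3)$ this is standard: write $\int_{\R^3} |x|^{-2}|h|^2\,dx$ and integrate by parts against the identity $\div(x/|x|^2) = 1/|x|^2$ (valid away from the origin; the origin contributes nothing since $d=3>2$ makes $|x|^{-2}$ locally integrable and the boundary term on small spheres vanishes). This yields $\int |x|^{-2}|h|^2 = -\int (x\cdot\nabla)(|h|^2)\,|x|^{-2}\,dx = -2\int |x|^{-2} h\, (x\cdot \nabla h)\,dx$, and Cauchy--Schwarz gives $\int |x|^{-2}|h|^2 \le 2\big(\int |x|^{-2}|h|^2\big)^{1/2}\big(\int |\nabla h|^2\big)^{1/2}$, hence $\||x|^{-1}h\|_2 \le 2\|\nabla h\|_2$. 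Combining with the first step, $\|h\|_2 \lesssim \|\nabla h\|_2 + \|\langle x\rangle\nabla h\|_2 \lesssim \|\langle x\rangle \nabla h\|_2$, since $|\nabla h| \le \langle x\rangle |\nabla h|$ pointwise.

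The main (minor) obstacle is the justification of the integration by parts near $x=0$: one should either work first with $\int_{|x|\ge \delta}$ and let $\delta \to 0$, checking the inner spherical boundary term $\int_{|x|=\delta} |x|^{-1}|h|^2\,dS \sim \delta \cdot |h(0)|^2 \to 0$, or simply cite the classical Hardy inequality on $\R^3$. An alternative that avoids this entirely is to use the Gagliardo--Nirenberg/Sobolev route: $\|h\|_{L^6} \lesssim \|\nabla h\|_{L^2}$ in $\R^3$, then H\"older with $|x|^{-2}\in L^{3/2}$... no, $|x|^{-2}\notin L^{3/2}_{\mathrm{loc}}$ uniformly, so that fails; the integration-by-parts proof is the honest one. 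Since $h\in\mathcal{S}(\R^3)$ all the decay at infinity is automatic and density is not even needed. I would present the argument in the order: (1) state and prove scalar Hardy via integration by parts with the cutoff $|x|\ge\delta$; (2) do the region decomposition to pass from $|x|^{-1}|h|$ to $\langle x\rangle^{-1}\cdot$ nothing — rather directly bound $\|h\|_2 \le \|h\|_{L^2(|x|\le 1)} + \|h\|_{L^2(|x|\ge1)}$ with $|h|\lesssim |x|^{-1}|h|$ on the first and $|h|\le|x|^{-1}\langle x\rangle|h|\lesssim |x|^{-1}|h|\cdot$ on the second... — cleanly: $|h| \lesssim |x|^{-1}|h|$ everywhere is false near infinity where it says $|h|\lesssim|h|/|x|$, which is \emph{true} for $|x|\ge1$; and for $|x|\le1$ it reads $|h|\lesssim|h|/|x|$ which is also true since $1/|x|\ge1$. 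So in fact $|h(x)|\le |x|^{-1}|h(x)|$ pointwise on all of $\{|x|\ge\text{anything}\}$... not quite, only where $|x|\le1$. In any case $\|h\|_2^2 = \|h\|^2_{L^2(|x|\le1)} + \|h\|^2_{L^2(|x|\ge1)} \le \||x|^{-1}h\|^2_{L^2(|x|\le1)} + \|h\|^2_{L^2(|x|\ge1)}$, and on $|x|\ge1$ we have $1 \le |x|^2 \le \langle x\rangle^2$ so $|h|^2 \le |x|^{-2}\langle x\rangle^2|h|^2$ — wait that's circular again since we want $\nabla h$ not $h$ on the right. The resolution: apply scalar Hardy to get $\||x|^{-1}h\|_2 \lesssim \|\nabla h\|_2 \le \|\langle x\rangle \nabla h\|_2$, and then $\|h\|^2_{L^2(|x|\ge1)} \le \||x|^{-1}h\|^2_{L^2(|x|\ge1)} \le \||x|^{-1}h\|_2^2 \lesssim \|\langle x\rangle\nabla h\|_2^2$, while $\|h\|^2_{L^2(|x|\le1)} \le \||x|^{-1}h\|^2_{L^2(|x|\le1)} \lesssim \|\langle x\rangle\nabla h\|_2^2$ likewise; summing finishes the proof. (3) Conclude. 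I expect step (1) to be the only place requiring care, and it is routine.
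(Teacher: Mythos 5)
Your argument has a genuine gap in the outer region. The key step you state as ``the resolution'' claims
\begin{equation*}
\|h\|^2_{L^2(|x|\ge1)} \le \||x|^{-1}h\|^2_{L^2(|x|\ge1)},
\end{equation*}
but this is false: on $\{|x|\ge 1\}$ one has $|x|^{-1}\le 1$, so the inequality runs the other way, $\||x|^{-1}h\|^2_{L^2(|x|\ge1)} \le \|h\|^2_{L^2(|x|\ge1)}$. Concretely, take $h(x)=\chi(x/R)$ for a fixed bump $\chi$ supported in $1\le|x|\le 2$ and $R\to\infty$: then $\|h\|^2_{L^2(|x|\ge1)}\sim R^3$ while $\||x|^{-1}h\|^2_{L^2(|x|\ge1)}\sim R$, so the claimed bound fails badly. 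The underlying issue is that the classical Hardy inequality $\||x|^{-1}h\|_2\lesssim\|\nabla h\|_2$ (the $\sigma=0$ case of the family $\||x|^{\sigma-1}h\|_2\lesssim\||x|^{\sigma}\nabla h\|_2$) gives control only near the origin, where $|x|^{-1}$ is large; it gives no information at infinity. The lemma being proved is, up to the bounded weight $\langle x\rangle/|x|$ on $\{|x|\ge1\}$, precisely the $\sigma=1$ member of this family, $\|h\|_2\lesssim\||x|\nabla h\|_2$ in $\R^3$, and you cannot derive the $\sigma=1$ inequality from the $\sigma=0$ one by splitting into regions.

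A direct fix is to run your integration-by-parts argument once at weight $\sigma=1$ instead of $\sigma=0$: using $\div(x)=3$,
\begin{equation*}
3\int |h|^2\,dx = -\int x\cdot\nabla(|h|^2)\,dx = -2\int h\,(x\cdot\nabla h)\,dx \le 2\,\|h\|_2\,\||x|\nabla h\|_2,
\end{equation*}
hence $\|h\|_2\le\tfrac{2}{3}\||x|\nabla h\|_2\le\tfrac23\|\langle x\rangle\nabla h\|_2$; for $h\in\mathcal{S}(\R^3)$ there is no singularity at the origin and no boundary term at infinity, so no $\delta$-cutoff is even needed. The paper's proof is a variant of this that avoids choosing the weight exponent: it expands $0\le\int(\langle r\rangle\partial_r h+\tfrac{r}{\langle r\rangle}h)^2\,dx$, integrates the cross term by parts in polar coordinates, and reads off $\int\langle r\rangle^2|\partial_r h|^2\ge\int\bigl(3-\tfrac{r^2}{\langle r\rangle^2}\bigr)|h|^2\ge 2\int|h|^2$ in one stroke, which handles the inner and outer behavior uniformly without a region split.
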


\begin{proof}The proof is similar to the 2D version as in \cite{CLLX}.
Let $\varphi=\frac{r}{\Lg r\Rg}$. Then
\begin{align*}
&0\leq \int (\Lg r\Rg \pa_{r}h+\varphi h)^2\ dx=\int \left(\Lg r\Rg^2(\pa_{r}h)^2+\Lg r\Rg\varphi\pa_{r}(h^2)+\varphi^2h^2\right) dx,
\\ \implies&\int \Lg r\Rg^2 |\pa_rh|^2\ dx \geq \int \left(\frac{1}{r^2}\pa_{r}(r^2\Lg r\Rg \varphi)-\varphi^2\right)h^2 \ dx\geq \int (3-(\frac{r}{\Lg r\Rg})^2)h^2\ dx\gtrsim \int |h|^2\ dx.
\end{align*}

\end{proof}

\subsection*{Pointwise estimates for the Klein-Gordon component}
We here collect some previously known pointwise estimates for the Klein-Gordon components, one can refer to Georgiev in \cite{G1992} (also see \cite{D}).
Let $\{p_j\}_{0}^\infty$ be the usual Littlewood-Paley partition of the unity
\[\sum_{j\ge 0} p_j(s)=1,\qquad s\ge 0,\]
which satisfies 
\begin{align}
0\le p_j\le 1,\qquad p_j\in C_0^\infty(\R)\qquad \text{for all }j\ge 0, 
\end{align}
and 
\begin{align}
\supp\ p_0\subset(-\infty, 2],\qquad \supp\ p_j\subset[2^{j-1},2^{j+1}]\qquad \text{for all }j\ge1.
\end{align}
\begin{lem}\label{kgt32}
Let $w$ be the solution of the Klein-Gordon equation
\begin{align}
\begin{cases}
\Box w+w=f,\\
 (w,\partial_t w)|_{t=0} =(w_0,w_1),
\end{cases}
\end{align}
with $f=f(t,x)$ a sufficiently nice function. Then for all $t\ge 0$, it holds
\begin{align}
\langle t+|x|\rangle^{\frac32}|w(t,x)|\lesssim \sum_{j\ge 0}(\sup_{0< s\le t}p_j(s)\|\langle s+|\cdot|\rangle\Gamma^{\le 4}f(s,\cdot)\|_{L^2}+\|\langle |\cdot|\rangle^{\frac 32} p_j(|\cdot|)\Gamma^{\le 4}w(0,\cdot)\|_{L^2}).
\end{align}

\end{lem}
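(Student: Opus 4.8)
The plan is to reduce the pointwise decay estimate for the inhomogeneous Klein-Gordon equation to a dyadic-in-frequency (or dyadic-in-space, via the Littlewood-Paley cutoffs $p_j$) decomposition and then invoke the known fundamental-solution estimate for the linear Klein-Gordon propagator. First I would split $w = w_{\mathrm{lin}} + w_{\mathrm{inh}}$, where $w_{\mathrm{lin}}$ solves the homogeneous equation $\Box w_{\mathrm{lin}} + w_{\mathrm{lin}} = 0$ with data $(w_0,w_1)$, and $w_{\mathrm{inh}}$ solves $\Box w_{\mathrm{inh}} + w_{\mathrm{inh}} = f$ with zero data; by linearity and the triangle inequality it suffices to bound each piece by the corresponding term on the right-hand side. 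For $w_{\mathrm{inh}}$ one writes the Duhamel representation $w_{\mathrm{inh}}(t) = \int_0^t \frac{\sin((t-s)\langle\nabla\rangle)}{\langle\nabla\rangle} f(s)\,ds$ and decomposes $f(s,\cdot) = \sum_j p_j(|\cdot|) f(s,\cdot)$ (spatial localization) or, after a further Fourier cutoff, into dyadic frequency pieces; the point is that on each dyadic piece the Klein-Gordon kernel enjoys the sharp $\langle t+|x|\rangle^{-3/2}$ decay with a loss that is summable in $j$ once four vector fields $\Gamma^{\le 4}$ are invested, exactly as in Georgiev's analysis in \cite{G1992}. The weight $\langle s+|\cdot|\rangle$ multiplying $\Gamma^{\le 4}f$ is what compensates the time integration $\int_0^t ds$, and the $\sup_{0<s\le t}$ together with the $p_j$-cutoff keeps the $s$-integral convergent after summing the dyadic pieces.

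The key steps, in order, are: (1) split into homogeneous and inhomogeneous parts; (2) for the homogeneous part, apply the stationary-phase/fundamental-solution estimate for $e^{\pm it\langle\nabla\rangle}$ localized to the dyadic shells $p_j(|\cdot|)$ in the data, which is precisely the second term $\|\langle|\cdot|\rangle^{3/2} p_j(|\cdot|)\Gamma^{\le 4} w(0,\cdot)\|_{L^2}$ on the right; (3) for the inhomogeneous part, use Duhamel and reduce, via the dyadic-in-$s$ decomposition encoded by $p_j$, to a single-frequency-shell estimate where the Klein-Gordon kernel gives $\langle t+|x|\rangle^{-3/2}$ decay with an acceptable polynomial loss in $j$; (4) absorb that loss using the four vector fields and the spatial weight $\langle s+|\cdot|\rangle$, then sum over $j$. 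Throughout one uses the commutation properties of $\Gamma$ (the vector fields other than $L_0$ all commute with $\Box + 1$, up to acceptable lower-order terms) so that applying $\Gamma^{\le 4}$ to the equation only produces terms of the same type.

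The main obstacle I expect is step (3)–(4): obtaining the sharp $\langle t+|x|\rangle^{-3/2}$ rate for the Duhamel integral \emph{uniformly} in the dyadic parameter, with a loss that is genuinely summable in $j$ rather than merely finite. This requires the careful stationary-phase analysis of the Klein-Gordon kernel on a frequency shell of size $2^j$ — controlling the interaction between the time delay $t-s$, the spatial weight, and the curvature of the characteristic hypersurface $\tau^2 = |\xi|^2 + 1$ — and it is exactly here that one must borrow the technical estimates of \cite{G1992}. Since the paper explicitly cites \cite{G1992} (and \cite{D}) for this statement, the honest approach is to reduce cleanly to the form proved there rather than reprove the kernel bounds; the work on our side is the bookkeeping of the vector fields and the dyadic summation.
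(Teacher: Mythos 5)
The paper gives no proof of this lemma; it is presented as a known estimate with a direct citation to Georgiev \cite{G1992} (and \cite{D}), so your closing remark --- that the honest approach is to reduce cleanly to Georgiev's result rather than reprove the kernel bounds --- is in fact exactly what the paper does. In that sense your proposal matches the paper's treatment.

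That said, your sketch of what Georgiev's argument does contains a small but real confusion about what the Littlewood--Paley cutoffs $p_j$ are acting on. In the statement, the $p_j$ appearing in the forcing term is $p_j(s)$, a cutoff in the \emph{time} variable $s$ inside the supremum $\sup_{0<s\le t}$, while the $p_j(|\cdot|)$ appearing in the data term is a cutoff in the spatial radius $|x|$ at $t=0$. Your step (3) first describes the decomposition of $f$ as a spatial localization $p_j(|\cdot|)f(s,\cdot)$ (or a Fourier-side frequency cutoff), and only afterwards corrects to a ``dyadic-in-$s$'' decomposition while simultaneously calling the resulting piece a ``single-frequency-shell estimate.'' These are three distinct decompositions, and only one of them (dyadic in the time $s$) matches the structure of the bound you are trying to prove. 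Georgiev's argument localizes Duhamel's integral to time blocks $s\sim 2^j$; on each block the length of the interval contributes a factor of order $2^j$ which is absorbed by the weight $\langle s+|\cdot|\rangle$ and then dominated by the single $\sup_s p_j(s)$ factor. If you instead localize in spatial or Fourier dyadic shells you will not recover the stated right-hand side. Also a minor point: all the $\Gamma$'s in the paper's list \eqref{def_Gamma0}--\eqref{def_Gamma} commute exactly with $\Box+1$ (this is precisely why $L_0$ is excluded), so no ``up to lower-order terms'' caveat is needed there. Beyond these points your split into homogeneous and inhomogeneous parts and the plan to invoke Georgiev's kernel estimates is sound.
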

\begin{lem}
With the same settings as in Lemma \ref{kgt32}, then for all $t\ge 0$ 
\begin{align}\label{d-1}
\langle t+|x|\rangle^{\frac32}|w(t,x)|\lesssim \sup_{0< s\le t}\|\langle s+|\cdot|\rangle^{1+\delta}\Gamma^{\le 4}f(s,\cdot)\|_{L^2} +\|{\langle |\cdot|\rangle^{\frac32+\delta }}\Gamma^{\le 4}w(0,\cdot)\|_{L^2}.
\end{align}
Here $0<\delta\ll1$.

\end{lem}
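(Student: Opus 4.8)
The plan is to deduce \eqref{d-1} directly from Lemma~\ref{kgt32} by using the Littlewood--Paley localization to generate a factor $2^{-j\delta}$ that makes the $j$-sum converge. Recall that Lemma~\ref{kgt32} controls $\langle t+|x|\rangle^{3/2}|w(t,x)|$ by
\[
\sum_{j\ge 0}\Big(\sup_{0<s\le t}p_j(s)\|\langle s+|\cdot|\rangle\Gamma^{\le 4}f(s,\cdot)\|_{L^2}+\|\langle |\cdot|\rangle^{3/2}p_j(|\cdot|)\Gamma^{\le 4}w(0,\cdot)\|_{L^2}\Big),
\]
so it suffices to bound each summand by $2^{-j\delta}$ times the right-hand side of \eqref{d-1} and then sum the geometric series $\sum_{j\ge 0}2^{-j\delta}=(1-2^{-\delta})^{-1}<\infty$.

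For the forcing term I would argue as follows. On $\supp p_j$ one has $s\sim 2^j$ when $j\ge 1$ and $s\lesssim 1$ when $j=0$, so in either case $1\lesssim 2^{-j\delta}\langle s\rangle^\delta$ uniformly on $\supp p_j$. Since $\langle s\rangle^\delta$ does not depend on the spatial variable it may be moved inside the $L^2_x$ norm, and since $\langle s\rangle\le\langle s+|x|\rangle$ it is absorbed via $\langle s\rangle^\delta\langle s+|x|\rangle\le\langle s+|x|\rangle^{1+\delta}$. Together with $p_j\le 1$ this yields
\[
\sup_{0<s\le t}p_j(s)\|\langle s+|\cdot|\rangle\Gamma^{\le 4}f(s,\cdot)\|_{L^2}\ \lesssim\ 2^{-j\delta}\sup_{0<s\le t}\|\langle s+|\cdot|\rangle^{1+\delta}\Gamma^{\le 4}f(s,\cdot)\|_{L^2}.
\]
The same idea applies to the data term: on $\supp p_j(|\cdot|)$ one has $\langle |x|\rangle\sim 2^j$ (resp. $\lesssim 1$), whence $\langle |x|\rangle^{3/2}\lesssim 2^{-j\delta}\langle |x|\rangle^{3/2+\delta}$ there, and with $p_j\le 1$,
\[
\|\langle |\cdot|\rangle^{3/2}p_j(|\cdot|)\Gamma^{\le 4}w(0,\cdot)\|_{L^2}\ \lesssim\ 2^{-j\delta}\|\langle |\cdot|\rangle^{3/2+\delta}\Gamma^{\le 4}w(0,\cdot)\|_{L^2}.
\]
Summing over $j$ then produces \eqref{d-1}.

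I do not expect a genuine obstacle here; the estimate is essentially a rescaled repackaging of Lemma~\ref{kgt32}. The only points needing a little care are the bookkeeping at the endpoint index $j=0$, where $2^{-j\delta}=1$ and the claimed inequalities reduce to the trivial $1\le\langle s\rangle^\delta$ and $\langle |x|\rangle^{3/2}\le\langle |x|\rangle^{3/2+\delta}$ (both valid since $\langle\cdot\rangle\ge 1$); the legitimacy of pulling the $s$-dependent weight $\langle s\rangle^\delta$ through the spatial $L^2$ norm; and the observation that the implied constant in \eqref{d-1} depends on $\delta$ through $\sum_{j\ge 0}2^{-j\delta}\sim\delta^{-1}$, which is consistent with the standing assumption $0<\delta\ll 1$.
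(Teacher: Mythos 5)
Your proof is correct and uses the standard dyadic-to-continuous argument that the paper implicitly intends: the paper states this lemma without proof as a known consequence of Lemma~\ref{kgt32} (citing Georgiev and Dong), and the way to extract it is exactly your observation that on $\supp p_j$ one has $1\lesssim 2^{-j\delta}\langle s\rangle^{\delta}$ (resp.\ $\langle|x|\rangle^{3/2}\lesssim 2^{-j\delta}\langle|x|\rangle^{3/2+\delta}$), allowing the geometric series $\sum_j 2^{-j\delta}$ to be summed after replacing the dyadic pieces by a global supremum. The bookkeeping at $j=0$ and the $\delta$-dependence of the implied constant are handled correctly.
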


\begin{lem}\label{lem2.7}
Suppose $u=u(t,x)$ is a smooth solution to $\Box u+u=F$. Then for $t>0$ we have 
\begin{align}\label{kgw}
\Big|\frac{\langle t+r\rangle}{\langle t-r\rangle}u\Big|
\lesssim&|\partial\Gamma^{\le 1}u|+\Big|\frac{\langle t+r\rangle}{\langle t-r\rangle}F\Big|.
\end{align}
\end{lem}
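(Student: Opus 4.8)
The plan is to exploit the algebraic identity relating the undifferentiated quantity $u$ to the second derivatives of $u$ and the box operator. The starting observation is that from the standard vector-field calculus one has an identity of the schematic form
\begin{align}\label{planpf1}
(t^2-r^2)\, u = (t^2-r^2)\big(\Box u - F\big) \quad\text{when } \Box u + u = F,
\end{align}
so that $u = (\Box u - F)$ pointwise, which by itself is useless; the real content must come from writing $(t-r)(t+r) u$ and absorbing one factor of $\langle t-r\rangle$ against second-order derivatives that, in turn, are controlled by Lemma~\ref{lem2.6}. Concretely, I would start from the equation $u = F - \Box u = F - \partial_{tt} u + \Delta u$, multiply by $\langle t+r\rangle/\langle t-r\rangle$, and then use Lemma~\ref{lem2.6} (with the roles reversed: there $\langle t-r\rangle \partial^2 u$ is bounded by $|\partial\Gamma^{\le 1} u| + \langle t+r\rangle |\Box u|$) to handle the $\partial_{tt}u$ and $\Delta u$ terms after inserting an extra $\langle t-r\rangle/\langle t-r\rangle$.

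The key step is therefore the following chain: write
\begin{align}\label{planpf2}
\Big|\frac{\langle t+r\rangle}{\langle t-r\rangle} u\Big|
= \Big|\frac{\langle t+r\rangle}{\langle t-r\rangle}\big(F - \partial_{tt}u + \Delta u\big)\Big|
\le \Big|\frac{\langle t+r\rangle}{\langle t-r\rangle} F\Big|
+ \frac{\langle t+r\rangle}{\langle t-r\rangle^2}\big(\langle t-r\rangle|\partial_{tt}u| + \langle t-r\rangle|\Delta u|\big).
\end{align}
Now apply Lemma~\ref{lem2.6} to the bracketed factor: $\langle t-r\rangle|\partial_{tt}u| + \langle t-r\rangle|\nabla^2 u| \lesssim |\partial\Gamma^{\le 1} u| + \langle t+r\rangle |\Box u|$. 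Since $\Box u = F - u$, the right-hand side picks up $\langle t+r\rangle |F| + \langle t+r\rangle |u|$. This produces, after multiplying by $\langle t+r\rangle/\langle t-r\rangle^2$, a term $\frac{\langle t+r\rangle^2}{\langle t-r\rangle^2}|u|$, which is worse than what we started with, so a naive substitution of $\Box u = F - u$ is circular and must be avoided. Instead I would keep $\Box u$ as is at this stage and only at the very end re-express it; the cleaner route is to bound $\langle t+r\rangle|\Box u| = \langle t+r\rangle|F - u|$ and move the resulting $\frac{\langle t+r\rangle^2}{\langle t-r\rangle^2}|u|$ term — but this is only legitimate when $\langle t+r\rangle \ll \langle t-r\rangle^2$, i.e. in the region $|t-r|$ not too small compared to $\sqrt{t+r}$.

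Hence the proof naturally splits into two regimes, and managing this split is where I expect the main difficulty to lie. In the region where $\langle t-r\rangle^2 \gtrsim \langle t+r\rangle$ (equivalently $\langle t+r\rangle/\langle t-r\rangle^2 \lesssim 1$), the term $\frac{\langle t+r\rangle^2}{\langle t-r\rangle^2}|u| = \frac{\langle t+r\rangle}{\langle t-r\rangle^2}\cdot \langle t+r\rangle |u|$ has a small prefactor and can be absorbed into the left-hand side after noting $\frac{\langle t+r\rangle}{\langle t-r\rangle^2} \lesssim 1 \le \frac{\langle t+r\rangle}{\langle t-r\rangle}$ is false — so instead one absorbs $\frac{\langle t+r\rangle^2}{\langle t-r\rangle^2}|u|$ only when this is $\le \frac12 \frac{\langle t+r\rangle}{\langle t-r\rangle}|u|$, i.e. when $\langle t+r\rangle \le \frac12 \langle t-r\rangle$. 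In the complementary region $\langle t+r\rangle \gtrsim \langle t-r\rangle$, one has $\frac{\langle t+r\rangle}{\langle t-r\rangle} \sim \langle t+r\rangle \langle t-r\rangle^{-1} \lesssim \langle t+r\rangle$, and here I would argue directly: $|u| \lesssim |\partial \Gamma^{\le 1} u|$ fails in general too, so the honest approach near the light cone is to use the wave-zone structure — decompose $\partial_{tt} - \Delta$ in null coordinates so that $\partial_{tt}u - \Delta u$ near $r\sim t$ is dominated by the "good" derivative $T$ acting twice plus $\frac{1}{r}\partial$ terms, all of which are $|\partial\Gamma^{\le 1}u|$-type. I will carry out this null-frame computation carefully, since the bookkeeping of which derivatives carry the $\langle t-r\rangle$ weight is exactly the crux; once the two regions are handled and glued together with the trivial bound for $|t-r|\le 1$, the estimate \eqref{kgw} follows.
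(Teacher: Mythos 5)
The proposal correctly diagnoses that the naive substitution $\Box u = F - u$ leads to circularity, and correctly identifies that the near-light-cone region $r\sim t$ is the crux. But the fix you sketch for that region does not close the argument, and this is a genuine gap, not a bookkeeping issue.

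Here is the problem. Near the light cone, say $\tfrac t2\le r\le 2t$, we have $\langle t+r\rangle/\langle t-r\rangle\sim t/\langle t-r\rangle$, which is large. You propose to estimate $\Box u=\partial_{tt}u-\Delta u$ in null coordinates and show it is controlled by ``good'' quantities of $|\partial\Gamma^{\le 1}u|$-type; but even granting this, the relation $u=F-\Box u$ then only yields the \emph{unweighted} estimate $|u|\lesssim|F|+|\partial\Gamma^{\le 1}u|$. This misses the required gain of a full factor $\langle t+r\rangle/\langle t-r\rangle$ on the left-hand side: you need $|u|$ to be small compared to $\langle t-r\rangle/\langle t+r\rangle$ times the right-hand side, not merely bounded by it. Likewise, the intermediate step~\eqref{planpf2} produces a term $\tfrac{\langle t+r\rangle}{\langle t-r\rangle^2}|\partial\Gamma^{\le 1}u|$, which already overshoots $|\partial\Gamma^{\le 1}u|$ by a factor $\langle t+r\rangle/\langle t-r\rangle^2\gg 1$ near the cone; there is no way to absorb this, and no amount of null-frame rearrangement of $\partial_{tt}-\Delta$ fixes a prefactor that is already wrong before any substitution is made.

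What the paper actually does is structurally different and is the missing idea. In the region $\tfrac t2\le r\le 2t$ it uses the algebraic identity
\begin{align}
(t^2-r^2)\Delta u = r^2\Box u + r\partial_r u - 3t\partial_t u - x_i\partial_t\Omega_{i0}u + t\partial_i\Omega_{i0}u,
\end{align}
in which $\Box u$ appears with the large coefficient $r^2\sim\langle t+r\rangle^2$. Substituting $\Box u=F-u$ therefore produces $-r^2u$, so after dividing by $r^2$ one can \emph{solve for} $u$:
\begin{align}
u = -\frac{t^2-r^2}{r^2}\Delta u + F + \frac 1r\partial_r u - \frac{3t}{r^2}\partial_t u - \frac1{r^2}\bigl(x_i\partial_t\Omega_{i0}u - t\partial_i\Omega_{i0}u\bigr).
\end{align}
Now multiplying by $\langle t+r\rangle/\langle t-r\rangle$ is harmless: the $\Delta u$ term carries the factor $(t^2-r^2)/r^2\sim\langle t-r\rangle/\langle t+r\rangle$ which exactly cancels the weight, and the remaining terms carry $1/r$ or $t/r^2\sim 1/r$, each $\lesssim\langle t-r\rangle/\langle t+r\rangle$ in this region. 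The large coefficient $r^2$ in front of $\Box u$ (coming from the $\Omega_{i0}$ commutation relations, not from the wave operator alone) is precisely what generates the gain you were looking for; without it the circularity you noticed cannot be broken. The exterior regions $r<\tfrac t2$ and $r>2t$ are trivial, as you observed, since there $\langle t+r\rangle\sim\langle t-r\rangle$.
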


\begin{proof}
If $r< \frac t2$ or $r> 2t$, we have $\langle t+r\rangle \sim\langle t-r\rangle$. This implies that 
\[\Big|\frac{\langle t+r\rangle}{\langle t-r\rangle}u\Big|
\lesssim| u|\lesssim |\Box u|+|F|
\lesssim |\partial\Gamma^{\le 1}u| +|F|.\]
If $\frac t2\le r\le 2t$, we recall that \eqref{2.300} gives that
\begin{align}
(t^2-r^2)\Delta u =&r^2\Box u+r\partial_r u-3t\partial_t u-x_i\partial_t\Omega_{i0}u+t\partial_i\Omega_{i0}u
\\ =& -r^2  u+r^2F+r\partial_r u-3t\partial_t u-x_i\partial_t\Omega_{i0}u+t\partial_i\Omega_{i0}u\qquad (\text{by }\Box u+u=F)
\\ \implies u=&-\frac{t^2-r^2}{r^2}\Delta u+F+\frac1r\partial_r u-\frac {3t}{r^2}\partial_t u-\frac1{r^2}(x_i\partial_t\Omega_{i0}u-t\partial_i\Omega_{i0}u).
\end{align}
This implies \eqref{kgw} holds for $\frac t2\le r\le 2t$. 
\end{proof}
\begin{rem}
  Note that one can iterate \eqref{kgw} as long as the nonlinear term $F$ is sufficiently nice. One example is $F=u^2$ then $F$ can easily absorb $\frac{\Lg t+r\Rg}{\Lg t-r\Rg}$ and eventually lead to the decay of $|u|\lesssim \Lg t+r\Rg^{-M}\Lg t-r\Rg^M$ for any large $M$. Such property of $u$ can be considered as a machinery that one can gain any decay in $\Lg t+r\Rg$ with a trade-off of losing $\Lg t-r\Rg$ decay.
\end{rem}

\subsection*{Verification of the smallness conditions of the Initial data}
In this subsection we verify the smallness condition \eqref{ID2} is consistent with the vector fields:
\begin{prop}\label{prop}
Assume $(\phi_0,\phi_1,n_0,n_1)$ satisfy the condition \eqref{ID2}, then for any $J\le K$ we have
    \begin{align}
     &\|\Ga^{\le J}\phi|_{t=0}\|_2\lesssim \big(\|\Lg x\Rg^J \Lg \na\Rg^J  \phi_0\|_2+\|\Lg x\Rg^J  \Lg \na \Rg^{J-1}\phi_1\|_2\big)\big(1+\sum_{j=0}^{J-3}\|\Lg x\Rg^{j+1}\na^j (\na n_0,n_1)\|_2\big),\label{B1}\\
     &\|\Ga^{\le J} n|_{t=0}\|_2\lesssim \big(\|\Lg x\Rg^{J-2} \Lg \na\Rg^{J-2}  \phi_0\|_2+\|\Lg x\Rg^{J-2}  \Lg \na \Rg^{J-3}\phi_1\|_2\big)\big(1+\sum_{j=0}^{J-1}\|\Lg x\Rg^{j+1}\na^j (\na n_0,n_1)\|_2\big).\label{B2}
    \end{align}
Similarly, we also have
\begin{align}\label{B3}
    \|\Lg x\Rg^{2}\Ga^{\le k+4}\phi|_{t=0}\|_2\lesssim\big(\|\Lg x\Rg^{k+6} \Lg \na\Rg^{k+4}  \phi_0\|_2+\|\Lg x\Rg^{k+6}  \Lg \na \Rg^{k+3}\phi_1\|_2\big)\big(1+\sum_{j=0}^{k+2}\|\Lg x\Rg^{j+1}\na^j (\na n_0,n_1)\|_2\big).
\end{align}

\end{prop}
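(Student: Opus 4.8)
The plan is to express $\Gamma^{\le J}\phi|_{t=0}$ and $\Gamma^{\le J}n|_{t=0}$ purely in terms of the data $(\phi_0,\phi_1,n_0,n_1)$ (and their spatial derivatives), and then bound the result using the hypothesis \eqref{ID2}. The key point is that any vector field $\Gamma$ acting on a function and then restricted to $t=0$ can be rewritten so that time derivatives are eliminated. For the translation and rotation fields $\partial_i$ ($i\ge 1$) and $\Omega_{ij}$ this is immediate: they are tangential to $\{t=0\}$ and carry only one extra factor of $\langle x\rangle$ (for $\Omega_{ij}$) and one extra spatial derivative. The only dangerous fields are $\partial_t$ and $\Omega_{i0}=t\partial_i+x_i\partial_t$; at $t=0$ the latter reduces to $x_i\partial_t$, so everything comes down to controlling $\langle x\rangle^m \partial_t^\ell \phi|_{t=0}$ and $\langle x\rangle^m \partial_t^\ell n|_{t=0}$. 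For this I would repeatedly substitute the equations \eqref{eq:kg}: $\partial_t^2\phi|_{t=0}=(\Delta\phi-\phi-n\phi)|_{t=0}=\Delta\phi_0-\phi_0-n_0\phi_0$, and $\partial_t^2 n|_{t=0}=(\Delta n+\Delta|\phi|^2)|_{t=0}=\Delta n_0+\Delta|\phi_0|^2$, with the first-order time derivatives $\partial_t\phi|_{t=0}=\phi_1$, $\partial_t n|_{t=0}=n_1$, and one differentiates the equations in $t$ to pass to higher even/odd orders. Each such substitution trades two powers of $\partial_t$ for two spatial derivatives (plus nonlinear terms), which is exactly why the right-hand sides of \eqref{B1}--\eqref{B3} lose derivatives relative to the order $J$.

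First I would set up an induction on $\ell$ producing a schematic formula: $\langle x\rangle^m\partial_t^\ell (\text{linear part of }\phi)|_{t=0}$ is a finite sum of terms $\langle x\rangle^m \Delta^a \phi_0$ with $2a\le \ell$, or $\langle x\rangle^m\Delta^a\phi_1$ with $2a+1\le\ell$, which is controlled by $\|\langle x\rangle^m\langle\nabla\rangle^\ell(\phi_0,\phi_1)\|_2$ after the routine observation that $\|\langle x\rangle^m\Delta^a f\|_2\lesssim\|\langle x\rangle^m\langle\nabla\rangle^{2a}f\|_2$ (these commute up to lower-order terms since $[\langle x\rangle^m,\partial]$ is lower order in $\nabla$). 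For the nonlinear contributions, the quadratic structure $n\phi$ and $\Delta|\phi|^2$ means that when I expand $\partial_t^\ell$ by Leibniz, every summand is a product of a $\Gamma$-type derivative of $\phi$ (or $n$) hitting the data and another such factor; I then put the factor with more derivatives in $L^2$ and the other in $L^\infty$ via Sobolev embedding $H^2(\R^3)\hookrightarrow L^\infty$, absorbing the $\langle x\rangle$ weights into whichever factor carries the spatial decay. This is where the product structure $(1+\sum_j\|\langle x\rangle^{j+1}\nabla^j(\nabla n_0,n_1)\|_2)$ on the right-hand side comes from: the $n$-data enters only through such lower-order factors, and the $1+$ accounts for the purely linear (no $n$) part. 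The $\Gamma^{\le J}n$ estimate \eqref{B2} is slightly better in $\phi$-regularity (losing two derivatives) because $n$ satisfies a wave equation whose source is $\Delta|\phi|^2$, so each time derivative of $n$ already "spends" regularity of $\phi$ less wastefully; tracking the index bookkeeping gives the stated $J-2$, $J-3$. Estimate \eqref{B3} is the same argument run with the extra weight $\langle x\rangle^2$ and order $k+4$, which forces the data-side weights up to $k+6$ and the derivative counts to $k+4$, $k+3$ — precisely the second line of \eqref{ID2}.

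The main obstacle, and the only place requiring genuine care rather than bookkeeping, is the weighted nonlinear estimate: one must verify that when $\partial_t^\ell$ falls on $n\phi$ or $|\phi|^2$ and the chain rule distributes the weight $\langle x\rangle^m$ and the derivatives between the two factors, the worst case (all derivatives on one factor, all weight on the other, or weight split) is still dominated by the product of the two norms appearing on the right. The clean way to organize this is a weighted Leibniz/Moser-type inequality $\|\langle x\rangle^m\Gamma^{\le \ell}(fg)\|_2\lesssim \sum_{a+b=m,\, i+j=\ell}\|\langle x\rangle^a\Gamma^{\le i}f\|_2\|\langle x\rangle^b\Gamma^{\le j}g\|_\infty+(f\leftrightarrow g)$, combined with the low-order Sobolev bound $\|\langle x\rangle^b\Gamma^{\le j}g\|_\infty\lesssim\|\langle x\rangle^{b+1}\langle\nabla\rangle^{j+2}g\|_2$ — here the extra $+1$ in the weight and $+2$ in derivatives is exactly the source of the shift seen in \eqref{B1}--\eqref{B3}. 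Once this lemma is in place, the proof reduces to checking that the index constraints $k\le K-4$ and $K\ge 8$ guarantee all the weights and derivative orders invoked on the right-hand sides are actually controlled by the two displayed lines of \eqref{ID2}; this is the step I would write out most carefully, since it is where the specific numerology of the theorem is pinned down.
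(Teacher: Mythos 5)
Your approach matches the paper's: reduce the vector fields at $t=0$ to weighted $\partial_t$-derivatives (the $\partial_i$ and $\Omega_{ij}$ are tangential, and $\Omega_{i0}|_{t=0}=x_i\partial_t$), eliminate $\partial_t^2$ by substituting the equations, run an induction on the order, and close the nonlinear pieces by an $L^2$--$L^\infty$ split via $H^2(\R^3)\hookrightarrow L^\infty$. The paper's proof is a brief sketch of exactly this argument (treating the "worst'' term $\|\langle x\rangle^J\partial_t^J\phi|_{t=0}\|_2$ and deferring the rest to "similar arguments''), and your write-up fills in the intermediate steps in somewhat more detail but does not deviate from it.
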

\begin{proof}
    The proof relies on an induction that is very similar to \cite{CLLX}, therefore we only sketch the idea. With no loss, we only consider the ``worst'' scenario as follows:
\begin{align}
    \|\Lg x\Rg^J \partial_t^J\phi|_{t=0}\|_2\lesssim \|\Lg x\Rg^J \partial_t^{J-2}\Delta \phi|_{t=0}\|_2+\|\Lg x\Rg^J \partial_t^{J-2}\phi|_{t=0}\|_2+\|\Lg x\Rg^J\partial_t^{J-2}(\phi n)|_{t=0}\|_2.
\end{align}
Note that by induction assumption we shall have
\begin{align}
    \|\Lg x\Rg^J \partial_t^{J-2}\Delta\phi|_{t=0}\|_2+\|\Lg x\Rg^J \partial_t^{J-2}\phi|_{t=0}\|_2\lesssim
\| \Lg x\Rg^{J}\Lg \na\Rg^J\phi_0\|_2+\| \Lg x\Rg^{J}\Lg \na\Rg^{J-1}\phi_1\|_2.
\end{align}
For the nonlinear part, naively we only present the worst case:
\begin{align}
    \|\Lg x\Rg^J\partial_t^{J-2}(\phi n)|_{t=0}\|_2\lesssim&\|\Lg x\Rg^J(\partial_t^{\le  J-2}\phi)( \partial_t^{\le \frac{J-2}{2}}n)|_{t=0}\|_2+\|\Lg x\Rg^J(\partial_t^{\le J-2}n)( \partial_t^{\le \frac{J-2}{2}}\phi)|_{t=0}\|_2+\cdots\\
    \lesssim&\|\Lg x\Rg^J\partial_t^{\le J-2}\phi |_{t=0}\|_2\|\partial_t^{\le \frac{J-2}{2}} n|_{t=0}\|_\infty+\|\Lg x\Rg^J\partial_t^{\le \frac{J-2}{2}}\phi |_{t=0}\|_\infty\|\partial_t^{\le {J-2}} n|_{t=0}\|_2+\cdots,
\end{align}
where $\cdots$ are terms that can be handled similarly. Then by induction and the $H^2(\R^3)\hookrightarrow L^\infty(\R^3)$ embedding we have
\begin{align}
&\|\Lg x\Rg^J\partial_t^{\le J-2}\phi |_{t=0}\|_2\|\partial_t^{\le \frac{J-2}{2}} n|_{t=0}\|_\infty\\
\lesssim& (\| \Lg x\Rg^{J}\Lg \na\Rg^{J-2}\phi_0\|_2+\| \Lg x\Rg^{J}\Lg \na\Rg^{J-3}\phi_1\|_2)\sum_{j=0}^{\frac{J}{2}}\|\Lg x\Rg^{j+1}\na^j(\na n_0,n_1)\|_2
\end{align}
and 
\begin{align}
&\|\Lg x\Rg^J\partial_t^{\le \frac{J-2}{2}}\phi |_{t=0}\|_\infty\|\partial_t^{\le {J-2}} n|_{t=0}\|_2\\
\lesssim& (\| \Lg x\Rg^{J}\Lg \na\Rg^{\frac{J+2}{2}}\phi_0\|_2+\| \Lg x\Rg^{J}\Lg \na\Rg^{\frac{J}{2}}\phi_1\|_2)\sum_{j=0}^{J-3}\|\Lg x\Rg^{j+1}\na^j(\na n_0,n_1)\|_2.
\end{align}
The general case $\|\Gamma^{J} \phi|_{t=0}\|_2$ follows from similar arguments.

\end{proof}

\section{uniform boundedness of the energy}
To prove Theorem~\ref{thm},
we first make an {\it a priori} hypothesis: assume 
\begin{align}
&|\Gamma^{\le k} n|\le C_1\varepsilon\langle t+r\rangle^{-\frac34}\langle r-t\rangle^{-\frac12},\label{A2}
\\&|\Gamma^{\le k} \phi|\le C_2\varepsilon \langle t+r\rangle^{-\frac32},\label{A3}
\end{align}
where the positive constants $C_1,\ C_2$ will be chosen later. As mentioned earlier, we decompose the wave component into
$n=n^0+\Delta n^\Delta$, where 
\begin{align}\label{3.333}
\begin{cases}
\Box n^0=0,\\
(n^0,\partial_t n^0)|_{t=0} =(n_0,n_1),
\end{cases}
\qquad \text{and}\qquad\
\begin{cases}
\Box n^\Delta=|\phi|^2,\\
(n^\Delta,\partial_t n^\Delta) |_{t=0}=\mathbf 0.
\end{cases}
\end{align}

\begin{lem}[Uniform boundedness of the energy]\label{lem3.1}
Let $(\phi,n)$ be a couple of solutions to \eqref{eq:kg}. Assume \eqref{A2} and \eqref{A3} hold. Let $K\ge k+4$.
Then we have 
\begin{align}\label{3.32}
\| \partial \Gamma^{\le K}\phi\|_{2}+\| \Gamma^{\le K}\phi\|_{2}+\|\partial\nabla\Gamma^{\le K} n^\Delta\|_{2}+\|\Gamma^{\le K}n\|_{L^2}\le C\varepsilon
\end{align}
and 
\begin{align}\label{3.34}
\|\pa\Gamma^{\le k+4} n^{\Delta}\|_{L^2} \le (C\varepsilon)^2.
\end{align}

\end{lem}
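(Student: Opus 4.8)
\emph{Strategy.} The plan is to run two coupled energy estimates --- a ghost--weight estimate for $\Gamma^{\le K}\phi$ and an ordinary higher--order estimate for $\Gamma^{\le K}n^\Delta$ --- and then to recover the undifferentiated norm $\|\Gamma^{\le K}n\|_2$ from the splitting $n=n^0+\Delta n^\Delta$ in \eqref{3.333}. Since every vector field in $\Gamma$ commutes with $\Box$ (and trivially with the identity), applying $\Gamma^\alpha$ with $|\alpha|\le K$ to \eqref{eq:kg} is clean:
\[
\Box\Gamma^\alpha\phi+\Gamma^\alpha\phi=-\Gamma^\alpha(n\phi),\qquad \Box(\nabla\Gamma^\alpha n^\Delta)=\nabla\Gamma^\alpha(|\phi|^2).
\]
Throughout, $k$ is the fixed integer comparable to $K/2$ (with $K\le 2k$, as well as $K\ge k+4$ from the hypothesis), so that in any Leibniz expansion of a product at order $\le K$ at least one factor carries at most $k$ vector fields, on which \eqref{A2}--\eqref{A3} apply; and $p(r,t)=q(r-t)$ with $q$ smooth, bounded and increasing, $q'$ comparable to $\langle s\rangle^{-1}$ up to an integrable logarithmic correction, so that $e^{p}\sim 1$.

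\emph{Step 1: ghost--weight estimate for $\phi$.} Pair $\Box\Gamma^\alpha\phi+\Gamma^\alpha\phi=-\Gamma^\alpha(n\phi)$ with $e^{p}\partial_t\Gamma^\alpha\phi$, integrate by parts in space, sum over $|\alpha|\le K$, and set $X(t)^2:=\sum_{|\alpha|\le K}\int e^{p}(|\partial\Gamma^\alpha\phi|^2+|\Gamma^\alpha\phi|^2)\,dx$, so that $X\sim\|\partial\Gamma^{\le K}\phi\|_2+\|\Gamma^{\le K}\phi\|_2$. This yields
\[
\frac{d}{dt}X^2+\sum_{|\alpha|\le K}\int e^{p}q'\bigl(|T\Gamma^\alpha\phi|^2+|\Gamma^\alpha\phi|^2\bigr)\,dx\ \lesssim\ \sum_{|\alpha|\le K}\Bigl|\int e^{p}\,\Gamma^\alpha(n\phi)\,\partial_t\Gamma^\alpha\phi\,dx\Bigr|.
\]
Expand $\Gamma^\alpha(n\phi)$ by Leibniz. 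The term $\phi\,\Gamma^\alpha n$ is bounded, via $\|\phi\|_\infty\lesssim\varepsilon\langle t\rangle^{-3/2}$ from \eqref{A3}, by $\varepsilon\langle t\rangle^{-3/2}\|\Gamma^{\le K}n\|_2\,X$; any intermediate product $\Gamma^\beta n\,\Gamma^\gamma\phi$ whose $\phi$--factor has $|\gamma|\le k$ derivatives is bounded in the same way. Every remaining product --- in particular $n\,\Gamma^\alpha\phi$ --- has its $n$--factor differentiated at most $k$ times, and Cauchy--Schwarz splits it into $\tfrac14\int e^{p}q'|\Gamma^\gamma\phi|^2$, absorbed by the left--hand side (the top--order piece directly, the lower--order pieces after an easy induction on $|\alpha|$), plus $C\int e^{p}\,(|\Gamma^\beta n|^2/q')\,|\partial_t\Gamma^\alpha\phi|^2$; by \eqref{A2}, $|\Gamma^\beta n|^2/q'\lesssim\varepsilon^2\langle t+r\rangle^{-3/2}(\log\langle t-r\rangle)^2\lesssim\varepsilon^2\langle t\rangle^{-3/2+\delta}$, so this last contribution is $\lesssim\varepsilon^2\langle t\rangle^{-3/2+\delta}X^2$. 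Altogether $\frac{d}{dt}X^2\lesssim\varepsilon\langle t\rangle^{-3/2}\|\Gamma^{\le K}n\|_2\,X+\varepsilon^2\langle t\rangle^{-3/2+\delta}X^2$.

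\emph{Step 2: the wave part and recovery of $\|\Gamma^{\le K}n\|_2$.} The energy inequality for the wave $\nabla\Gamma^\alpha n^\Delta$, together with Leibniz and \eqref{A3} on the low--order factor, gives, with $Y(t):=\|\partial\nabla\Gamma^{\le K}n^\Delta\|_2$,
\[
Y(t)\ \lesssim\ Y(0)+\int_0^t\|\nabla\Gamma^{\le K}(|\phi|^2)\|_2\,ds\ \lesssim\ \varepsilon^2+\varepsilon\int_0^t\langle s\rangle^{-3/2}X(s)\,ds,
\]
where $Y(0)\lesssim\varepsilon^2$ by \eqref{eq:kg} and \eqref{ID2} (cf.\ Proposition~\ref{prop}); truncating the same computation at order $k+4$ gives the corresponding bound for $\|\partial\Gamma^{\le k+4}n^\Delta\|_2$. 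Writing $\Gamma^\alpha n=\Gamma^\alpha n^0+\Delta\Gamma^\alpha n^\Delta+[\Gamma^\alpha,\Delta]n^\Delta$ and using the equation to replace $\partial_t^2 n^\Delta$ by $\Delta n^\Delta+|\phi|^2$, one obtains $\|\Gamma^{\le K}n\|_2\lesssim\|\Gamma^{\le K}n^0\|_2+Y+\varepsilon^2$; and $\|\Gamma^{\le K}n^0\|_2\lesssim\varepsilon$ because $n^0$ is a free wave whose differentiated energy at order $\le K$ is conserved and small, while its undifferentiated $L^2$ norm is controlled by the weighted Hardy inequality \eqref{S-2} together with the weighted bounds on $(\nabla n_0,n_1)$ in \eqref{ID2}. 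Hence $\|\Gamma^{\le K}n\|_2\lesssim\varepsilon+Y$.

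\emph{Step 3 and the main difficulty.} Steps 1--2 give the closed system $\frac{d}{dt}X^2\lesssim\varepsilon\langle t\rangle^{-3/2}(\varepsilon+Y)X+\varepsilon^2\langle t\rangle^{-3/2+\delta}X^2$ and $Y\lesssim\varepsilon^2+\varepsilon\int_0^t\langle s\rangle^{-3/2}X\,ds$, with $X(0)\lesssim\varepsilon$. Since $\langle s\rangle^{-3/2+\delta}$ is time--integrable and appears multiplied by a positive power of $\varepsilon$, a routine continuity/bootstrap argument (assume $X,Y\le A\varepsilon$ on a maximal interval, feed back to get $X\lesssim\varepsilon+A\varepsilon^{3/2}$ and $Y\lesssim(1+A)\varepsilon^2$, then fix $A$ large and $\varepsilon_0$ small) closes and yields $X(t)+Y(t)\lesssim\varepsilon$, hence $\|\Gamma^{\le K}n\|_2\lesssim\varepsilon$; this is \eqref{3.32}, and then $Y\lesssim\varepsilon^2$ together with the order--$(k+4)$ estimate gives \eqref{3.34}. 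I expect the crux to be the term $\int e^{p}\,n\,\Gamma^\alpha\phi\,\partial_t\Gamma^\alpha\phi\,dx$ and its low--order analogues: the undifferentiated $n$ decays only like $\langle t+r\rangle^{-3/4}\langle t-r\rangle^{-1/2}$, whose $\langle t+r\rangle$--decay alone is not time--integrable, so the ghost weight $e^{p}q'$ is indispensable --- it is precisely what converts the extra $\langle t-r\rangle^{-1}$ sitting in $|n|^2$ into an integrable power of $\langle t\rangle$. A secondary point is the bookkeeping guaranteeing that \eqref{A2}--\eqref{A3} apply to one factor of each nonlinear product (the relation between $k$ and $K$) and the control of the genuinely undifferentiated norm $\|\Gamma^{\le K}n\|_2$, both handled by the Katayama--type splitting $n=n^0+\Delta n^\Delta$.
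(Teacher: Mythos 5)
Your proposal is correct and takes essentially the same route as the paper's proof: Katayama's splitting $n=n^0+\Delta n^\Delta$, Alinhac's ghost weight on the Klein--Gordon side (Cauchy--Schwarz splitting the $n\,\Gamma^\alpha\phi$ term into an absorbable $\int e^p q'|\Gamma^\gamma\phi|^2$ piece plus a $\int e^p(|n|^2/q')|\partial_t\Gamma^\alpha\phi|^2$ piece that is time-integrable precisely because \eqref{A2} supplies the $\langle t-r\rangle^{-1}$ the ghost weight needs), a direct energy estimate on $\partial\nabla\Gamma^{\le K}n^\Delta$ from $\Box\nabla\Gamma^\gamma n^\Delta=\nabla\Gamma^\gamma(|\phi|^2)$, and a Gronwall/continuity closure of the coupled system. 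Your bookkeeping of which factor carries the $\Gamma^{\le k}$ pointwise bounds, and of where \eqref{ID2} via Proposition~\ref{prop} feeds in for the initial energy, matches the paper.

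The one place you are too terse --- and it is exactly where a genuine subtlety of the lemma sits --- is the bound $\|\Gamma^{\le K}n^0\|_2\lesssim\varepsilon$. You write that the ``differentiated energy is conserved'' and the ``undifferentiated $L^2$ norm is controlled by the weighted Hardy inequality \eqref{S-2}.'' As stated this is not an argument: \eqref{S-2} carries the growing weight $\langle x\rangle$, and for a free wave the quantity $\|\langle x\rangle\nabla\Gamma^\alpha n^0\|_2$ grows like $t$ along the light cone, so it is \emph{not} controlled by the conserved ordinary energy. One needs a real mechanism to propagate the undifferentiated norm forward in time. The paper uses the conformal multiplier $X(\partial)=(1+t^2+r^2)\partial_t+2tr\partial_r$, whose identity for free waves shows $\|\Gamma^{\le1}\Gamma^{\tilde\beta}n^0\|_2^2+\|L_0\Gamma^{\tilde\beta}n^0\|_2^2$ is conserved; Hardy \eqref{S-2} and the weighted assumptions on $(\nabla n_0,n_1)$ in \eqref{ID2} are invoked only at $t=0$ to bound this quantity initially. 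The Appendix gives the equivalent route via the propagator formula $\Gamma^\beta n^0=\cos(tD)v_0+\tfrac{\sin(tD)}{D}v_1$ with a Hardy estimate on the Fourier side, again reducing to weighted data at $t=0$. You should state which of these you are using; without one of them your Step~2 has a gap, and it is exactly this step that explains the weighted hypotheses on $n_0,n_1$ in \eqref{ID2}.
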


\begin{proof} To begin with we first consider the wave component $\|\Ga^{\le K}n\|_2$.

\texttt{Step 1, the wave component $n$.}

Note that since $n=n^0+\Delta n^\Delta$, we have for any multi-index $\alpha$,
\begin{align}\label{3.31}
|\Gamma^{\alpha}\Delta n^\Delta|\lesssim |\Delta \Gamma^{\le |\alpha|}n^\Delta|+ |\nabla \partial_t \Gamma^{\le |\alpha|}n^\Delta|\lesssim |\partial\nabla \Gamma^{\le |\alpha|} n^{\Delta}|.
\end{align}
As a result for $|\beta|\le K$, it is clear that
\begin{align}\label{3.21}
\|\Gamma^{\beta}n\|_{2}^2\lesssim \|\Gamma^{\beta}n^0\|_{2}^2+\|\Gamma^{\beta}\Delta n^{\Delta}\|_{2}^2
\lesssim \|\Gamma^{\beta}n^0\|_{2}^2+\|\partial\nabla\Gamma^{\le |\beta|}n^{\Delta}\|_{2}^2.
\end{align}
Then we estimate the two terms in RHS of \eqref{3.21}.

\texttt{ Step 1.1, we estimate $\|\Gamma^{\beta}n^0\|_{L^2}^2$.}
Denote $X(\partial)=(1+t^2+r^2)\partial_t+2tr\partial_r$. Obviously,
\begin{align}
X(\partial)u=&(1+t^2+r^2)\partial_tu+2tr\partial_ru  
\\ =&\ \partial_tu+t^2\partial_tu+tr\partial_ru+r^2\partial_tu+tr\partial_ru  
\\=&\ \partial_tu +tL_0u+r\Omega u  
\\ \lesssim &\langle r+t\rangle(|L_0 u|+|\Gamma u|). 
\end{align}
Let $|\tilde\beta|\le K-1$, we have $\Box \Gamma^{\tilde\beta} n^0=0$. Then we have 
\begin{align}
0=\int \Box \Gamma^{\tilde\beta} n^0 \cdot X(\partial)\Gamma^{\tilde\beta} n^0\sim\frac d{dt}(\|\Gamma^{\le 1} \Gamma^{\tilde\beta} n^0\|_2^2+\| L_0 \Gamma^{\tilde\beta} n^0\|_2^2).
\end{align}
This implies that 
\begin{align}\label{3.13}
\| \Gamma^{\le K} n^0\|_2^2\lesssim \sum_{|\tilde \beta|\le K-1}\|\Gamma \Gamma^{\tilde\beta} n^0\|_2^2(0)+{\|L_0\Gamma^{\tilde\beta} n^0\|_2^2(0)+\|\Ga^{\tilde\beta}n^0\|_2^2(0)}{\le (C \varepsilon)^2.}
\end{align}
{Note that here the smallness condition on the initial data \eqref{ID2} is required:
\begin{align}\label{3.12a}
   \|(\Gamma^{\le K} n^0)_{t=0}\|_{2}+\| (L_0 \Gamma^{\le K-1} n^0)_{t=0}\|_2^2\le \sum_{j=0}^{K}\|\langle x\rangle^{j}\nabla^j n_0\|_{2}+\sum_{j=0}^{K-1}\|\langle x\rangle^{j+1}\nabla^j n_1\|_{2}\le C_i\varepsilon, 
\end{align}
where the constant $C_i>0$ is known from \eqref{B1}-\eqref{B3}. We also point out here that the requirement of $\|n_0\|_2\le \varepsilon$ can be replaced by $\|\langle x\rangle \nabla n_0\|_2$ as a consequence of the embedding in \eqref{S-2}.}

\begin{rem}
This method is called $X(\partial)$ trick in some literature (cf. \cite{H97}) and known as the conformal energy estimates in other references such as \cite{D,D21}. Since the equation of $n^0$ is not related to $\phi$, then we can replace $\Gamma$ by $\widetilde\Gamma$ in \texttt{step 1.1}. Thus we obtain the following estimates:
\begin{align}\label{3.130}
\| \widetilde\Gamma^{\le K} n^0\|_2\lesssim \|\widetilde \Gamma^{\le K} n^0\|_2(0)
\lesssim \sum_{j=0}^{K-1}\|\langle x\rangle^{j+1}\nabla^j (\nabla n_0,n_1)\|_{2}\le \varepsilon.
\end{align}
\end{rem}

\texttt{Step 1.2, we estimate $\|\partial \nabla\Gamma^{\le |\beta|}n^{\Delta}\|_{2}^2$.}
 Let $|\gamma|=m$ and $m\le |\beta|$ is a running index, we have 
\begin{align}\label{3.17}
\Box \nabla\Gamma^\gamma n^\Delta=\sum_{\gamma_1+\gamma_2= \gamma}C_{\gamma;\gamma_1,\gamma_2}\nabla\Gamma^{\gamma_1}\phi\Gamma^{\gamma_2}\phi.
\end{align}
Multiplying both sides of \eqref{3.17} by $\partial_t\nabla\Gamma^\gamma n^\Delta$, we obtain
\begin{align}
\frac{d}{dt}\|\partial\nabla\Gamma^\gamma n^\Delta\|_{2}^2
\lesssim& \int|\nabla\Gamma^{\gamma_1}\phi\Gamma^{\gamma_2}\phi \partial_t\nabla\Gamma^\gamma n^\Delta|
 \\ \lesssim &(\|\nabla\Gamma^{\le [\frac{K-1}2]}\phi\|_{\infty}\|\Gamma^{\le K}\phi\|_{2}
              +\|\nabla\Gamma^{\le K}\phi\|_{2}\|\Gamma^{\le [\frac{K}2]}\phi\|_{\infty})
              \| \partial_t\nabla\Gamma^\gamma n^\Delta\|_{2}
 \\ \lesssim &  \|\Gamma^{\le [\frac{K-1}2]+1}\phi\|_{\infty} (\|\nabla\Gamma^{\le K}\phi\|_{2}+\|\Gamma^{\le K}\phi\|_{2})  \| \partial_t\nabla\Gamma^\gamma n^\Delta\|_{2}         
 \\ \le & \ C\varepsilon t^{-\frac32}(\|\nabla\Gamma^{\le K}\phi\|_{2}+\|\Gamma^{\le K}\phi\|_{2})\| \partial_t\nabla\Gamma^\gamma n^\Delta\|_{2} \qquad\qquad (\text{by \eqref{A3}}).
\end{align}
Note that here we need $[\frac{K-1}{2}]+1\le k\le K-4 \implies K\ge 8$.
This implies that 
\begin{align}\label{3.27}
\frac{d}{dt}\|\partial\nabla\Gamma^\gamma n^\Delta\|_{2}^2
\le & \ C\varepsilon t^{-\frac32}(\|\nabla\Gamma^{\le K}\phi\|_{2}^2+\|\Gamma^{\le K}\phi\|_{2}^2+\| \partial_t\nabla\Gamma^\gamma n^\Delta\|_{2}^2).
 \end{align}

\texttt{Step 2, we estimate the Klein-Gordon component $\|\Ga^{\le K}\phi\|_2, \|\pa\Ga^{\le K}\phi\|_2$.}

Let $\alpha$ be a multi-index and $|\alpha|\le K$. 
Write $\Phi= \Gamma^{\alpha} \phi$. We have
\begin{align}\label{3.0A}
\square \Phi+\Phi=-\sum_{\alpha_1+\alpha_2=\alpha}C_{\alpha; \alpha_1,\alpha_2} \Gamma^{\alpha_1}n \Gamma^{\alpha_2}\phi.
\end{align}
Let $p(t,r) = q(r-t)$, where
\begin{align}\label{q}
q(s) = \int_{-\infty}^s \langle \tau\rangle^{-1}  \bigl(\log ( 2+\tau^2) \bigr)^{-2} d\tau.
\end{align}
Clearly
\begin{align}
-\partial_t p = \partial_r p = q^{\prime}(r-t)
= \langle r -t \rangle^{-1} \bigl( \log (2+(r-t)^2)  \bigr)^{-2}.
\end{align}
Multiplying both sides of \eqref{3.0A} by $e^p \partial_t \Phi$, we obtain
\begin{align}
   \text{LHS}&=\frac12\frac{d}{dt}(\| e^\frac{p}{2}\partial \Phi\|_{2}^2+\| e^\frac{p}{2} \Phi\|_{2}^2) +\frac 12
   \int e^p q^{\prime} (|T \Phi|^2+|\Phi|^2),
\end{align}
where $|\partial \Phi|^2=\sum_{i=0}^3|\pa_i \Phi|^2$ and $|T\Phi|^2=\sum_{i=1}^3|T_i \Phi|^2$. We split the RHS into two cases:
\begin{align}
\text{RHS} &= -\sum_{\alpha_1+\alpha_2=\alpha}C_{\alpha; \alpha_1,\alpha_2} \int e^p \Gamma^{\alpha_1}n \Gamma^{\alpha_2}\phi \partial_t \Phi.
\end{align}

\texttt{Case 1:} $\alpha_1<\alpha_2\le \alpha$.
\begin{align}
\text{RHS}\lesssim \int e^p q' |\Gamma^{\alpha_2}\phi|^2+\int \frac{e^p}{q'}|\Gamma^{\le [\frac{|\alpha|}2]}n|^2|\partial_t \Phi|^2\lesssim&\OK+\|\langle r-t\rangle^{\frac{1+\delta_1}2} \Gamma^{\le [\frac{|\alpha|}2]}n\|_{\infty}^2\|\partial_t\Phi\|_{2}^2
\\\le& \OK+(C\varepsilon)^2t^{-\frac32+\delta_1}\|\partial_t\Phi\|_{2}^2\qquad (\text{by \eqref{A2}}).
\end{align}
Here $0< \delta_1<\frac12$ and  
OK are the harmless terms that can be absorbed by the LHS.

\texttt{Case 2:} $\alpha\ge\alpha_1\ge \alpha_2$.
\begin{align}
\text{RHS}\lesssim &\| \Gamma^{\le|\alpha|}n\|_{2}\|\Gamma^{\le [\frac{|\alpha|}2]}\Phi\|_{\infty}\|\partial_t\Phi\|_{2}
\\ \le &C\varepsilon t^{-\frac32}(\| \Gamma^{\le|\alpha|}n^0\|_{2}+\| \Gamma^{\le|\alpha|}\Delta n^\Delta\|_{2})\|\partial_t\Phi\|_{2}\qquad (\text{by \eqref{A3} and $n=n^0+\Delta n^\Delta$})
\\ \le &(C\varepsilon)^2 t^{-\frac32}\|\partial_t\Phi\|_{2}+C\varepsilon t^{-\frac32}\| \partial \nabla\Gamma^{\le |\alpha|} n^\Delta\|_{2}\|\partial_t\Phi\|_{2} \qquad (\text{by \eqref{3.13}}).
\end{align}
Then we derive that 
\begin{equation}\label{3.29}
\begin{split}
&\frac{d}{dt}(\| \partial \Phi\|_{2}^2+\| \Phi\|_{2}^2) +\frac 12
   \int e^p q^{\prime} (|T \Phi|^2+|\Phi|^2)\\\le&\  (C\varepsilon)^2t^{-\frac32+\delta_1}\| \partial \Phi\|_{2}^2+(C\varepsilon)^2t^{-\frac32}\| \partial \Phi\|_{2}+C\varepsilon t^{-\frac32}\| \partial \nabla\Gamma^{\le |\alpha|} n^\Delta\|_{2}\|\partial_t\Phi\|_{2}.
\end{split}
\end{equation}
Then by combining \eqref{3.27} and \eqref{3.29}, we obtain that 
\begin{align}
&\frac{d}{dt}(\| \partial \Gamma^{\le K}\phi\|_{2}^2+\| \Gamma^{\le K}\phi\|_{2}^2+\|\partial\nabla\Gamma^{\le K} n^\Delta\|_{2}^2)
\\\le&\  (C\varepsilon)^2t^{-\frac32+\delta_1}\| \partial \Phi\|_{2}^2+(C\varepsilon)^2t^{-\frac32}\| \partial \Phi\|_{2}+ \ C\varepsilon t^{-\frac32}(\|\nabla\Gamma^{\le K}\phi\|_{2}^2+\|\Gamma^{\le K}\phi\|_{2}^2+\| \partial_t\nabla\Gamma^{\le K} n^\Delta\|_{2}^2).
\end{align}
This implies that 
\begin{align}
\| \partial \Gamma^{\le K}\phi\|_{2}+\| \Gamma^{\le K}\phi\|_{2}+\|\partial\nabla\Gamma^{\le K} n^\Delta\|_{2}
\le C\varepsilon.
\end{align}
Here we need the following smallness assumption:
\begin{align}\label{3.35a}
    \| (\partial \Gamma^{\le K}\phi)|_{t=0}\|_{2}+
\| (\Gamma^{\le K}\phi)|_{t=0}\|_{2}+\|(\partial\nabla\Gamma^{\le K} n^\Delta)|_{t=0}\|_{2}\le C_i\varepsilon,    
\end{align}
where the positive constant $C_i$ is clear from the initial condition (cf. \eqref{B1}-\eqref{B3}).

Furthermore it follows from \eqref{3.21} and \eqref{3.13} that
\begin{align}
\|\Gamma^{\le K}n\|_{2}\le C\varepsilon.
\end{align}
Thus \eqref{3.32} holds.

Next we show \eqref{3.34}.
Let $\alpha$ be a multi-index and $|\alpha|=m\le k+4$. Then we have
\begin{align}\label{3.35}
\Box \Gamma^\alpha n^\Delta=\sum_{\alpha_1+\alpha_2= \alpha}C_{\alpha;\alpha_1,\alpha_2}\Gamma^{\alpha_1}\phi\Gamma^{\alpha_2}\phi.
\end{align}
 Multiplying both sides of \eqref{3.35} by $\partial_t\Gamma^\alpha n^\Delta$, we obtain
\begin{align}
\frac{d}{dt}\|\partial\Gamma^\alpha n^\Delta\|_{2}^2
\lesssim& \sum_{\alpha_1+\alpha_2= \alpha}\int|\Gamma^{\alpha_1}\phi\Gamma^{\alpha_2}\phi \partial_t\Gamma^\alpha n^\Delta|
 \\ \lesssim &\|\Gamma^{\le [\frac{m}2]}\phi\|_{\infty}\|\Gamma^{\le m}\phi\|_{L^2}
              \| \partial_t\Gamma^\alpha n^\Delta\|_{2}
\\ \le & \ (C\varepsilon)^2 t^{-\frac32}\| \partial_t\Gamma^\alpha n^\Delta\|_{2} \qquad\qquad (\text{by \eqref{A3} and \eqref{3.32}}),\\ \implies \|\partial\Gamma^\alpha n^\Delta\|_{2}\lesssim& \|(\partial\Gamma^{\alpha}n^{\Delta})|_{t=0}\|_{2}+(C\varepsilon)^2.\label{3.42}
\end{align}
Since interchanging the vector fields merely generates lower-order terms, we estimate the initial data following similar arguments in \cite{CLLX} (see Lemma 2.6 therein): 
\begin{align}
\|(\partial\Gamma^{\alpha}n^{\Delta})|_{t=0}\|_{2}
\lesssim\|(\Gamma_{\Omega}^{\alpha_1}\partial_{t}^{\alpha_2}\nabla^{\alpha_3}n^{\Delta})|_{t=0}\|_{2}
\lesssim \sum_{\alpha_1+\alpha_2+\alpha_3\le K+1}\sum_{J=0}^{\alpha_1}\|\langle x\rangle^{J}(\partial^J\partial_{t}^{\alpha_2}\nabla^{\alpha_3}n^{\Delta})|_{t=0}\|_2,
\end{align}
where $\Gamma_{\Omega}=\Omega_{i0},\Omega_{ij}$. Recalling the equation of $n^\Delta$ in \eqref{3.333}, we have for any $b\ge2$ and $a\ge0$
\[\partial_t^b\nabla^a n^\Delta=\partial_t^{b-2}\nabla^a \Delta n^\Delta+\partial_t^{b-2}\nabla^a(|\phi|^2).
\]
Since $(n^{\Delta},\partial_t n^{\Delta})|_{t=0}=\mathbf 0$, one has (by Proposition \ref{prop})
\begin{align}
\sum_{J=0}^{\alpha_1}\|\langle x\rangle^{J}(\partial^J\partial_{t}^{\alpha_2}\nabla^{\alpha_3}n^{\Delta})|_{t=0}\|_2
\lesssim \|\langle x\rangle^{[\frac{K+1}2]+1}\langle \partial \rangle^{ K-1}\phi|_{t=0}\|_2\|\langle x\rangle^{[\frac{K+1}2]}\langle \partial \rangle^{ [\frac{K+1}{2}]}\phi|_{t=0}\|_\infty\le C\varepsilon^2.\label{3.44}
\end{align}
The cases $b=0,1$ follow easily. Hence \eqref{3.34} holds combining the estimates \eqref{3.42} and \eqref{3.44}. The initial condition of the Klein-Gordon component can be handled similarly and we refer the analysis to Proposition~\ref{prop}.
\begin{rem}\label{rem3.2}
 Note that here even though the initial data $(\Gamma^\alpha n^\Delta,\partial_t \Gamma^\alpha n^\Delta)|_{t=0}$ are not zero, it is of the order $O(\varepsilon^2)$ thanks to the nonlinear structure. Therefore one can treat $n^\Delta$ as a higher order perturbation of $n^0$.
\end{rem}

\end{proof}

\section {optimal pointwise decay in time and scattering}
In this section we complete the proof of Theorem~\ref{thm} by showing the $a\ priori$ hypothesis and the scattering. We first prove the two assumptions:

\subsection{Assumption 1: wave component \eqref{A2}}
Recall $n=n^0+\Delta n^\Delta$. Then we separate the proof into two parts: given multi-index $|\alpha|=k$,
\begin{align}
&|\Gamma^\alpha n^0|\le C\varepsilon \langle r-t\rangle^{-\frac12}\langle r+t\rangle^{-1},\label{4.1}\\
&|\Gamma^\alpha \Delta n^\Delta|\le C\varepsilon \langle r-t\rangle^{-\frac12}\langle r+t\rangle^{-1}.\label{4.2}
\end{align} 
\texttt{Homogeneous part $n^0$ \eqref{4.1}}:  

Since the equation of $\Gamma^{\alpha}n^0$ is not related to $\phi$ then by the Klainerman Sobolev inequality \eqref{ksineqa} we can gain a better decay for the homogeneous part with a careful analysis of the conformal energy:
\begin{align}
|\Gamma^{\alpha} n^0|\le  & C_{ks}\langle r-t\rangle^{-\frac12}\langle r+t\rangle^{-1}\|\widetilde\Gamma^{\le k+3}n^{0}\|_{2}\qquad \text{(by \eqref{ksineqa})}
\\\le&\ C_{ks}\varepsilon\langle r-t\rangle^{-\frac12}\langle r+t\rangle^{-1}\qquad \text{(by \eqref{3.130})},\label{3.70}
\end{align}
{where $C_{ks}$ is the known constant from the Klainerman-Sobolev embedding \eqref{ksineqa}.}

\texttt{Inhomogeneous part $n^\Delta$ \eqref{4.2}:}

Recall from \eqref{3.31} that we have
\begin{align}
|\Gamma^{\alpha}\Delta n^\Delta|\lesssim |\Delta \Gamma^{\le |\alpha|}n^\Delta|+ |\nabla \partial_t \Gamma^{\le |\alpha|}n^\Delta|\lesssim |\partial\nabla \Gamma^{\le |\alpha|} n^{\Delta}|.
\end{align}
Therefore it suffices to show that 
\begin{align}\label{3.3}
|\partial\nabla \Gamma^{\le |\alpha|} n^{\Delta}|
\le (C\varepsilon)^2\langle r-t\rangle^{-\frac12}\langle r+t\rangle^{-1}.
\end{align}
For this purpose, we consider the two cases $r<\frac t2$ and $r\ge \frac t2$. With no loss we assume that $t>2$.

{\bf Case 1: $r<\frac t2$}. 
Recall \eqref{2.260}: $|\langle r-t\rangle\partial \nabla u|\le |\partial \Gamma^{\le 1}u |+(t+r)|\Box u|$ for all $r>0$. We obtain 
\begin{align}
|\langle r-t\rangle \partial\nabla \Gamma^{\alpha} n^{\Delta}|\lesssim &| \partial\Gamma^{\le |\alpha|+1} n^{\Delta}|+(t+r)|\Box  \Gamma^{\alpha}n^{\Delta}|
\\ \lesssim &\ t^{-\frac34}\|\partial \Gamma^{\le |\alpha|+4} n^{\Delta}\|_{2}+\|\langle t+|\cdot|\rangle \Gamma^{\le [\frac {|\alpha|} 2]}\phi\|_{\infty}\|\Gamma^{\le |\alpha|}\phi \|_{\infty}\qquad (\text{by }\eqref{t34})
\\ \lesssim &\ \varepsilon^2 t^{-\frac34}+\varepsilon^2t^{-2}\qquad(\text{by \eqref{3.34} and \eqref{A3}}).\label{3.4}
\end{align}

{\bf Case 2: $r\ge\frac t2$}. By \eqref{t1}, we obtain 
\begin{align}
\langle r-t\rangle^{\frac12} t|\partial \nabla \Gamma^{\alpha} n^{\Delta}|
 \lesssim& \|\partial\Gamma^{\le |\alpha|+3} n^{\Delta}\|_{2}+\|\langle t-|\cdot|\rangle\partial\nabla\Gamma^{\le |\alpha|+3} n^{\Delta}\|_{2}
\\ \lesssim &\|\partial\Gamma^{\le |\alpha|+4}n^{\Delta}\|_{2}+\|\langle t+|\cdot|\rangle\Box\Gamma^{\le |\alpha|+3}n^{\Delta}\|_{2}\qquad \text{(by \eqref{2.260})}
\\ \lesssim &\  \varepsilon^2+\|\langle t+|\cdot|\rangle\Box\Gamma^{\le |\alpha|+3}n^{\Delta}\|_{2}\qquad \text{(by \eqref{3.34})}.
\end{align}
Recall $\Box n^{\Delta}=|\phi|^2$. Let $\beta$ be a multi-index and $|\beta|\le |\alpha|+3$, we have 
\[\Box\Gamma^{\beta}n^{\Delta}=\sum_{\beta_1+\beta_2= \beta}C_{\beta;\beta_1,\beta_2}\Gamma^{\beta_1}\phi\Gamma^{\beta_2}\phi.\]
This implies that
\begin{align}
\|\langle t+|\cdot|\rangle \Box\Gamma^{\le |\alpha|+3}n^{\Delta}\|_{2}\lesssim \|\Gamma^{\le |\alpha|+3}\phi\|_{2}\|\langle t+|\cdot|\rangle \Gamma^{\le [\frac{|\alpha|+3}2]}\phi\|_{\infty}
\le (C\varepsilon)^2t^{-\frac12} 
\qquad \text{(by \eqref{3.32} and \eqref{A3})}.
\end{align}
It follows that 
\begin{align}
\langle r-t\rangle^{\frac12} t|\partial \nabla \Gamma^{\alpha} n^{\Delta}|
 \le (C\varepsilon)^2,\qquad \text{for }r\ge\frac t2.
\end{align}
Thus we have proved \eqref{3.3}. Then it follows that
\begin{align}\label{3.33}
|\Gamma^{\alpha}n|\le& |\Gamma^{\alpha}n^0|+|\Gamma^{\alpha}\Delta n^{\Delta}|
\\ \le &\ C_{ks}\varepsilon\langle r-t\rangle^{-\frac12}\langle r+t\rangle^{-1}+(C\varepsilon)^2\langle r-t\rangle^{-\frac12}\langle r+t\rangle^{-1}
\qquad \text{(by \eqref{3.70} and \eqref{3.3})}
\\ \le&\ (C_{ks}\varepsilon+\widetilde {C}_{n}\varepsilon^2)\langle r-t\rangle^{-\frac12}\langle r+t\rangle^{-1}.
\end{align}

{ Then it suffices to choose $C_1>C_{ks}+\widetilde {C}_{n}\varepsilon$ in \eqref{A2}. }

\subsection{Assumption 2: Klein-Gordon component \eqref{A3}}

Let $\beta$ be a multi-index and $|\beta|\le k$. Then  $\Gamma^{\beta}\phi$ satisfies
\begin{equation}
\begin{cases}
\Box \Gamma^{\beta}\phi +\Gamma^{\beta}\phi=-\sum_{\beta_1+\beta_2=\beta}C_{\beta;\beta_1,\beta_2}\Gamma^{\beta_1}n\Gamma^{\beta_2}\phi=:\widetilde F,\\
(\phi,\partial_t\phi)|_{t=0}=(\phi_0,\phi_1).
\end{cases}
\end{equation}
By \eqref{d-1}, we have 
\begin{align}
\langle t+|x|\rangle^{\frac32}|\Gamma^{\beta}\phi(t,x)|\lesssim{\sup_{0<s\le t}}\|\langle s+|\cdot|\rangle^{1+\delta}\Gamma^{\le 4}\widetilde F(s,\cdot)\|_{L^2}+\|\langle |\cdot|\rangle^{2} \Gamma^{\le 4}\Gamma^{\beta}\phi(0,\cdot)\|_{L^2},
\end{align}
where $0<\delta \ll 1$.
For any $s\in(0,t)$ we have
\begin{align}
&\|{\langle s+|\cdot|\rangle^{1+\delta}}\Gamma^{\le 4}\widetilde F(s,\cdot)\|_{L^2}
\\\lesssim& \|\Gamma^{\le |\beta|+4}n\|_{L^2}\|\langle s+|\cdot|\rangle^{1+\delta}\Gamma^{\le [\frac{|\beta|+4}2]}\phi\|_{L^\infty}
+\|\langle s+|\cdot|\rangle^{1+\delta}\Gamma^{\le [\frac{|\beta|+4}2]}n\Gamma^{\le |\beta|+4}\phi\|_{L^2}
\\\lesssim &\ \varepsilon^2 +\|\langle s+|\cdot|\rangle^{1+\delta}\Gamma^{\le [\frac{|\beta|+4}2]}n\Gamma^{\le |\beta|+4}\phi\|_{L^2}
 \qquad \text{(by \eqref{A3} and \eqref{3.32})}.\label{3.59}
\end{align}
For the last term in \eqref{3.59},
we recall \eqref{kgw}: {for 
$u$ satisfying} $\Box u+u=F$, one has
\begin{align}
\Big|\frac{\langle t+r\rangle}{\langle t-r\rangle}u\Big|
\lesssim&|\partial\Gamma^{\le 1}u|+\Big|\frac{\langle t+r\rangle}{\langle t-r\rangle}F\Big|.
\end{align}
 Let $u=\Gamma^{\mu}\phi$ with $|\mu|\le |\beta|+4$. 
Since $\Box \Gamma^{\mu}\phi+\Gamma^{\mu}\phi=\sum_{\mu_{1}+\mu_2= \mu}C_{\mu;\mu_1,\mu_2}\Gamma^{\mu_1}n\Gamma^{\mu_2}\phi$, it follows that 
\begin{align}
 \|\frac{\langle t+|\cdot|\rangle}{\langle t-|\cdot|\rangle}\Gamma^{\mu}\phi\|_{2}
\lesssim&\|\partial\Gamma^{\le |\mu|+1}\phi\|_{2}+\sum_{\mu_{1}+\mu_2= \mu}\|\frac{\langle t+|\cdot|\rangle}{\langle t-|\cdot|\rangle}\Gamma^{\mu_1}n\Gamma^{\mu_2}\phi\|_2
 \lesssim   \varepsilon. \label{3.680}
\end{align}
Thus
\begin{align}
\|\langle s+|\cdot|\rangle^{1+\delta}\Gamma^{\le [\frac{|\beta|+4}2]}n\Gamma^{\le |\beta|+4}\phi\|_{2}
\lesssim& \|\langle s+|\cdot|\rangle^{\frac34}\langle s-|\cdot|\rangle^{\frac12}\Gamma^{\le [\frac{|\beta|+4}2]}n\|_{L^\infty}\|\frac{\langle s+|\cdot|\rangle^{\frac14+\delta}}{\langle s-|\cdot|\rangle^{\frac12}}\Gamma^{\le |\beta|+4}\phi\|_{2}
\\ \le&\ C\varepsilon \|\frac{\langle s+|\cdot|\rangle}{\langle s-|\cdot|\rangle}\Gamma^{\le |\beta|+4}\phi\|_{2} ,\qquad(\text{by \eqref{A2}})
\\ \le&\ C\varepsilon^2,\qquad(\text{by  \eqref{3.680}}).
\end{align}
Here the constant $C$ may vary from line to line. It then follows that 
\begin{align}
\langle t+r\rangle^{-\frac32}|\Gamma^{\beta}\phi(t,x)|\le& C\varepsilon^2+
\|\langle |\cdot|\rangle^2\Gamma^{\le 4}\Gamma^{\beta}\phi(0,\cdot)\|_{2}
\\ \le& C\varepsilon^2+C_i\varepsilon\qquad\text{(by \eqref{ID2} and \eqref{B3})}
\\ \le& C_i\varepsilon+\widetilde C_{\phi}\varepsilon^2.
\end{align}
Then it suffices to choose $C_2>{C_i}+\widetilde {C}_{\phi}\varepsilon$ in \eqref{A3},
{where $C_i$ is the known constant from the initial condition (cf. \eqref{B3}).}

\subsection{Scattering}\label{scat}
It remains to show the scattering of $(\phi,n)$ in the energy space $\mathcal{X}_K=H^{K+1}\times H^{K}\times H^K\times H^{K-1} $, namely there exists a couple of solutions to the linear system $(\phi_l, n_l)$ such that  
\begin{align}
    \lim_{t\to \infty}\|(\phi,\partial_t\phi, n,\partial_t n)-(\phi_l,\partial_t\phi_l, n_l,\partial_t n_l)\|_{\mathcal{X}_K}= 0,
\end{align}
where $(\phi,\partial_t\phi, n,\partial_t n)|_{t=0}=(\phi_0,\phi_1,n_0,n_1)$ and $(\phi_l,\partial_t\phi_l, n_l,\partial_t n_l)|_{t=0}=(\phi_{l_0},\phi_{l_1},n_{l_0},n_{l_1}) $ for some $(\phi_{l_0},\phi_{l_1},n_{l_0},n_{l_1})$.
Recall that for the Cauchy problem for the following inhomogeneous 3d wave system:
\begin{align}
    \Box n= G\, ,\ \mbox{with } (n,\partial_t n)|_{t=0}=(n_0,n_1),
\end{align}
the solution $\vec{n}=(n,\partial_t n)$ is given by
\begin{align}
    \vec{n}(t,x)=M(t)\begin{pmatrix}
n_0\\
n_1
\end{pmatrix}+\int_0^t M(t-s)\begin{pmatrix}
0\\
G(s)
\end{pmatrix}\ ds,
\end{align}
where 
\begin{align}
    M(t)=\begin{pmatrix}
\cos(t |\na|) & |\na|^{-1}\sin(t|\na|)\\
-|\na|\sin(t|\na|) & \cos(t|\na|)
\end{pmatrix}.
\end{align}
Similarly, for the Cauchy problem for the inhomogeneous 3d Klein-Gordon system:
\begin{align}
    \Box \phi+\phi = H\, ,\ \mbox{with } (\phi,\partial_t \phi)|_{t=0}=(\phi_0,\phi_1),
\end{align}
the solution $\vec{\phi}=(\phi,\partial_t \phi)$ is given by
\begin{align}
    \vec{\phi}(t,x)=N(t)\begin{pmatrix}
\phi_0\\
\phi_1
\end{pmatrix}+\int_0^t N(t-s)\begin{pmatrix}
0\\
H(s)
\end{pmatrix}\ ds,
\end{align}
where 
\begin{align}
    N(t)=\begin{pmatrix}
\cos(t \Lg\na\Rg) & \Lg \na\Rg^{-1}\sin(t\Lg\na\Rg)\\
-\Lg\na\Rg\sin(t\Lg\na\Rg) & \cos(t\Lg\na\Rg)
\end{pmatrix}.
\end{align}
More specifically $G(\phi)=\Delta |\phi|^2$ and $H(n,\phi)=-n\phi$ in our setting.
 By a standard semi-group argument, it is known that the wave flow $M(t)$ and Klein-Gordon flow $N(t)$ are unitary semi-groups on $H^{J}\times H^{J-1}$ for $J\in \mathbb{N}^+$. More precisely one can verify on the Fourier side that for any $J\in \mathbb{N}^+$ one has
 \begin{align}
\|M(t)\|_{\mathcal{H}^J\to\mathcal{H}^J }=\|N(t)\|_{\mathcal{H}^J\to\mathcal{H}^J }=1,
 \end{align}
where $\mathcal{H}^J=H^J\times H^{J-1}$. Moreover, one recalls that 
\begin{align}
\begin{pmatrix}
        n_l\\
        \partial_t n_l
    \end{pmatrix}=M(t)
    \begin{pmatrix}
        n_{l_0}\\
        n_{l_1}
    \end{pmatrix}
   \ ,\  \begin{pmatrix}
        \phi_l\\
        \partial_t\phi_l
    \end{pmatrix}=N(t)
    \begin{pmatrix}
        \phi_{l_0}\\
        \phi_{l_1}
    \end{pmatrix}.
\end{align}
Note that by \eqref{3.32}-\eqref{3.34}, we have 
\begin{align}
    \int_0^\infty \| G(\phi)\|_{H^{K-1}}+\|H(\phi,n)\|_{H^{K}}\le C\varepsilon^2.
\end{align}
In particular, we set
\begin{align}
    \begin{pmatrix}
        n_{l_0}\\n_{l_1}
    \end{pmatrix}
    =\begin{pmatrix}
        n_{0}\\n_{1}
    \end{pmatrix}+\int_0^\infty M(-s)
    \begin{pmatrix}
        0\\
        G(\phi)
    \end{pmatrix}\ ds,
\end{align}
and 
\begin{align}
    \begin{pmatrix}
        \phi_{l_0}\\\phi_{l_1}
    \end{pmatrix}
    =\begin{pmatrix}
        \phi_{0}\\\phi_{1}
\end{pmatrix}+\int_0^\infty N(-s)
    \begin{pmatrix}
        0\\
        H(\phi,n)
    \end{pmatrix}\ ds,
\end{align}
then since $M(t)$ and $N(t)$ are unitary semi-groups we have 
\begin{align}
    \|\begin{pmatrix}
        \phi\\ \partial_t \phi    \end{pmatrix}(t)-N(t)
        \begin{pmatrix}
        \phi_{l_0}\\\phi_{l_1}    
        \end{pmatrix}
        \|_{\mathcal{H}^{K+1}}\le \int_t^\infty \| H(\phi,n)\|_{H^K}\ ds,
\end{align}
and
\begin{align}
    \|\begin{pmatrix}
        n\\ \partial_t n    \end{pmatrix}(t)-M(t)
        \begin{pmatrix}
        n_{l_0}\\n_{l_1}   
        \end{pmatrix}        \|_{\mathcal{H}^{K}}\le \int_t^\infty \| G(\phi)\|_{H^{K-1}}\ ds.
\end{align}
Therefore we have shown that $(\phi,n)$ scatters to $(\phi_l,n_l)$ as $t\to+\infty$:
\begin{align}
    \lim_{t\to \infty}\|(\phi,\partial_t\phi, n,\partial_t n)-(\phi_l,\partial_t\phi_l, n_l,\partial_t n_l)\|_{\mathcal{X}_K}= 0.
\end{align}

\appendix

\section{An alternative approach to to estimate $\|\Gamma^{\beta}n^0\|_{L^2}^2$.}

In this section we give an alternative approach to estimate $\|\Gamma^{\beta}n^0\|_{L^2}^2$ as in Lemma~\ref{lem3.1}. It is clear that $\Box \Ga^\beta n^0=0.$ Therefore it is known that we can write $\Ga^\beta n^0$ in the following mild form: 
\begin{align}
    \Ga^\beta n^0(x)= (\cos tD\  v_0)(x)+(\frac{\sin tD}{D}v_1)(x),
\end{align}
where $D=|\na|$ and $(v_0,v_1)=(\Ga^\beta n^0,\partial_t\Ga^\beta n^0)|_{t=0}$.
Then it follows that $\| \Ga^{\beta} n^0\|_2\le \|\cos tD\ v_0\|_2+\|\frac{\sin tD}{D}v_1\|_2$.
Similar to the 2D case as in \cite{CLLX}, one easily show from the Fourier side that
\begin{align}
    \|\cos tD\ v_0\|_{L^2_x(\R^3)}\sim \|\cos(t|\xi|) \hat{v_0}(\xi)\|_{L^2_\xi(\R^3)}\lesssim\|\hat{v_0}\|_{L^2_\xi(\R^3)}\lesssim \|v_0\|_{L^2_x(\R^3)}. 
\end{align}
Similarly by the Hardy's inequality,
\begin{align}
 \|\frac{\sin tD}{D}v_1\|_{L^2_x(\R^3)}^2\lesssim\int_{\R^3} \frac{(\sin(t|\xi|)^2}{|\xi|^2} |\hat{v_1}|^2\ d\xi\lesssim \int_{\R^3} \frac{|\hat{v_1}|^2}{|\xi|^2} \ d\xi\lesssim \|\hat{v_1}\|^2_{H^1_\xi(\R^3)}\lesssim\|\Lg x\Rg v_1\|^2_{L^2_x(\R^3)}.
\end{align}
\begin{rem}
   Note that compared to the $X(\partial)$ trick (conformal energy estimates), this propagator estimate needs an additional assumption:
\begin{align}\label{4.48}
    \|v_0\|_2+\|\Lg x\Rg v_1\|_2\lesssim\sum_{j=0}^K\|\Lg x\Rg^{j+1}\na^j(\na n_0,n_1)\|_2\le C\varepsilon.
\end{align}
One clearly sees the difference between \eqref{3.12a} and \eqref{4.48}.
\end{rem}

\section*{Acknowledgement}
X. Cheng was partially supported by the Shanghai ``Super Postdoc" Incentive Plan and China Postdoctoral Science Foundation (Grant No. 2022M710796, 2022T150139).

\bibliographystyle{abbrv}

\end{document}